\providecommand{\U}[1]{\protect\rule{.1in}{.1in}}
\newtheorem{theorem}{Theorem}[section]
\newtheorem{lemma}[theorem]{Lemma}
\theoremstyle{definition}
\newtheorem{remark}[theorem]{Remark}
\numberwithin{equation}{section}
\DeclareMathOperator\supp{supp}
\title{A self-improving property of Riesz potentials in $BMO$}
\author[Y.-W. Chen]{You-Wei Benson Chen}
\begin{document}

\maketitle

\begin{abstract}
    In this paper we prove that for non-negative measurable functions $f$, 
    \begin{align*}
I_\alpha f \in BMO(\mathbb{R}^n) \text{ if and only if } I_\alpha f \in BMO^\beta(\mathbb{R}^n) \text{ for } \beta \in (n-\alpha,n].
    \end{align*}
Here $I_\alpha$ denotes the Riesz potential of order $\alpha$ and $BMO^\beta$ represents the space of functions of bounded $\beta$-dimensional mean oscillation introduced in \cite{Chen-Spector}.  

\end{abstract}

\section{Introduction}


The space of functions of bounded $\beta$-dimensional mean oscillation, denoted as $BMO^\beta(\mathbb{R}^n)$, was introduced in \cite{Chen-Spector} recently. This space extends the classical notion of bounded mean oscillation to include the Choquet integral with respect to Hausdorff content. Specifically, for $0 < \beta \leq  n \in \mathbb{N}$, a function $u$ is said to be in $BMO^\beta(\mathbb{R}^n)$ if it is locally integrable with respect to the $\beta$-dimensional Hausdorff content $\mathcal{H}^\beta_\infty$ (see Section 2 for definition) and satisfies the condition

\begin{align*}
  \sup_{Q \subseteq \mathbb{R}^n}  \inf_{c \in \mathbb{R}}  \frac{1}{\ell(Q)^\beta} \int_Q | u(x)-c| \; d \mathcal{H}^\beta_\infty < \infty .
\end{align*}
  In here, $\ell(Q)$ is the length of the side of the cube and $\mathcal{H}^\beta_\infty$ is the Hausdroff content defined as
\[\mathcal{H}_{\infty}^{\beta}(E):=\inf\left\{\sum_{i=1}^\infty \omega_{\beta} r_i^{\beta}:E\subset \bigcup_{i=1}^\infty B(x_i,r_i)\right\},\]
where $\omega_{\beta} := {\pi^{\beta/2}}/{\Gamma\left(\frac{\beta}{2}+1\right)}$ is a normalized constant.

An analogue of the John-Nirenberg inequality for $BMO^\beta$ is also given in \cite[Theorem 1.3]{Chen-Spector}, which asserts that the functions of $\beta$-dimensional mean oscillation also have exponential integrability with respect to Hausdorff content. That is, if $u \in BMO^\beta(\mathbb{R}^n)$, then there exist constants $c_1= c_1(\beta),$ $c_2= c_2(\beta)>0$ such that for any cube $Q$ and any $\lambda>0$,
\begin{align}\label{jninequality_content_prime}
\mathcal{H}^{\beta}_\infty\left(\{x\in Q:|u(x)-c_Q|> \lambda\}\right) \leq c_1 l(Q)^\beta \exp(-c_2  \lambda /\|u \|_{BMO^\beta(\mathbb{R}^n)}),
\end{align} 
where $c_Q \in \mathbb{R}$ is a suitable constant depends on $Q$.  In particular, by integrating respect to $\lambda$ in (\ref{jninequality_content_prime}) and utilizing the pointwise estimate $t \leq \exp{ (t) }$ for $0\leq t$, one obtains the equivalence relation: $u  \in BMO^\beta (\mathbb{R}^n)$ if and only if there exist constants $\gamma ,C' >0$ independent of $Q$ such that
\begin{align}\label{betaexpintegral}
    \int_Q \exp{ \left( \gamma |u- c_Q| \right)} \; d\mathcal{H}^\beta_\infty \leq C' \ell(Q)^\beta
\end{align}
for every finite subcube $Q \subseteq \mathbb{R}^n$ and suitable constant $c_Q \in \mathbb{R}$ depending on $Q$. Moreover, it is a consequence of  (\ref{jninequality_content_prime}) that for $0 < \beta_1 \leq \beta_2 \leq n$, there exists a constant $C = C(\beta_1, \beta_2)$ such that
\begin{align}\label{inclusionBMObeta}
\Vert u \Vert_{BMO^{\beta_2} (\mathbb{R}^n)} \leq C \Vert u \Vert_{BMO^{\beta_1} (\mathbb{R}^n)}
\end{align}
(see \cite[Corollary 1.6]{Chen-Spector}).

The aim of this paper is to explore the mapping properties of Riesz potentials $I_\alpha$ into $BMO^\beta(\mathbb{R}^n)$ spaces, focusing specifically on the Morrey spaces $\mathcal{M}^\alpha_p (\mathbb{R}^n)$ and weak Lebesgue spaces $L^{n/\alpha,\infty} (\mathbb{R}^n)$. Our motivation is inspired by exponential integrability concerning Radon measures satisfying ball growth conditions. This includes several inequalities analogous to (\ref{jninequality_content_prime}) have been established (see, for example, \cite[Theorem 3]{Adams_1972}, \cite[p.~210]{Adamsbook} , \cite[Corollary 1.4.1 and Corollary 8.6.2]{Mazja_1985}, \cite[Theorem 2.2 and Theorem 2.5]{Cianchi_2008}, and especially the results of Adams \cite{Adams_1975}, Adams and Xiao \cite{Adams_2015} (see also \cite{Adams_2011}), Mart\'{\i}nez and Spector \cite[Corollary 1.6]{MD_2021}. To state their result, we recall that for $0<\alpha < n$ and $1\leq p \leq \frac{n}{\alpha}$, the Morrey space $\mathcal{M}^\alpha_p (\mathbb{R}^n)$ denotes the collection of all the measurable functions $f$ in $\mathbb{R}^n$ satisfying
\begin{align}\label{defMorrey}
    \Vert f \Vert_{\mathcal{M}^\alpha_p (\mathbb{R}^n)}:= \sup\limits_{Q\subseteq \mathbb{R}^n} \ell(Q)^{\alpha } \left( \frac{1}{\ell(Q)^n}  \int_Q |f|^p \; dy   \right)^{\frac{1}{p}} < \infty.
\end{align}
Moreover, a locally finite Radon measures $\mu$ is said to be belong to the Morrey space $\mathcal{M}^\beta (\mathbb{R}^n)$ if
\begin{align*}
\sup_{x \in \mathbb{R}^n,r>0} \frac{|\mu (B(x,r))|}{\;r^{\beta}} <\infty,
\end{align*}
where $|\mu|$ denotes the total variation of $\mu$.  As the topological dual of $L^1(\mathcal{H}^{\beta}_\infty; \mathbb{R}^n)$ can be identified with the Morrey space $\mathcal{M}^\beta (\mathbb{R}^n)$(see (\ref{HB})), the result of Adams and Xiao \cite[Theorem 0.1 (iii)]{Adams_2015} can be stated as 

\begin{theorem}[Adams and Xiao \cite{Adams_2015}]\label{Adams2015theorem}
    Let $0<\alpha < n$, $1\leq p \leq \frac{n}{\alpha}$ and $f$ be a non-negative function with support in a bounded domain $\Omega \subseteq \mathbb{R}^n$. If $f \in \mathcal{M}^\alpha_p (\mathbb{R}^n)$, then there exist constants $\gamma$, $C'' >0$ depend on $\alpha$, $\beta$, $n$, $p$ and $\Omega$ such that 
    \begin{align}\label{expAdams2015}
        \int_\Omega \exp{ \left( \frac{\gamma | I_\alpha f|}{ \Vert f \Vert_{\mathcal{M}^\alpha_p(\mathbb{R}^n)} } \right) } \; d \mathcal{H}^\beta_\infty < C'',
    \end{align}
holds  for $\beta \in ( n- \alpha p ,n]$.
\end{theorem}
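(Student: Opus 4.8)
The plan is to reduce \eqref{expAdams2015} to an exponential decay bound for the Hausdorff--content distribution function of $I_\alpha f$, and to obtain that bound from a Hedberg--type truncation of the Riesz kernel together with a weak--type estimate for the Hardy--Littlewood maximal operator measured against $\mathcal H^\beta_\infty$. Normalize so that $\|f\|_{\mathcal M^\alpha_p(\mathbb R^n)}=1$; this is harmless by homogeneity of both sides in $f$. Since $f\ge 0$ we have $I_\alpha f\ge 0$, so the layer--cake formula for the Choquet integral gives
\[
\int_\Omega \exp(\gamma\, I_\alpha f)\,d\mathcal H^\beta_\infty
= \mathcal H^\beta_\infty(\Omega) + \gamma\int_0^\infty e^{\gamma\lambda}\,
\mathcal H^\beta_\infty\big(\{x\in\Omega: I_\alpha f(x)>\lambda\}\big)\,d\lambda .
\]
Hence it suffices to produce constants $c_0=c_0(\alpha,\beta,n,p)>0$ and $C=C(\alpha,\beta,n,p,\Omega)$ with $\mathcal H^\beta_\infty(\{x\in\Omega: I_\alpha f(x)>\lambda\})\le C e^{-c_0\lambda}$ for all $\lambda>0$, and then take any $\gamma<c_0$.

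For the decay, set $R:=\mathrm{diam}(\Omega)$ and split, for $0<\delta\le 2R$,
\[
I_\alpha f(x)=\int_{|x-y|<\delta}\frac{f(y)\,dy}{|x-y|^{n-\alpha}}
+\int_{\delta\le|x-y|\le 2R}\frac{f(y)\,dy}{|x-y|^{n-\alpha}} .
\]
A dyadic decomposition of the first region gives the classical bound $\le C(n,\alpha)\,\delta^\alpha\, Mf(x)$, where $Mf$ denotes the Hardy--Littlewood maximal function. For the second region one sums over the $\lesssim \log(2R/\delta)$ dyadic annuli and uses, on each, H\"older's inequality together with the Morrey bound $\int_{B(x,r)}f\le C_n\|f\|_{\mathcal M^\alpha_p} r^{n-\alpha}$ (a consequence of \eqref{defMorrey}); each annulus contributes $O(1)$ after normalization, so the second region is $\le C_1\log(2R/\delta)$ with $C_1=C_1(n,\alpha)$. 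Choosing $\delta=\delta(\lambda)$ so that $C_1\log(2R/\delta)=\lambda/2$, i.e. $\delta$ exponentially small in $\lambda$, we obtain
\[
\{x\in\Omega: I_\alpha f(x)>\lambda\}\subseteq\{x\in\Omega: Mf(x)>\sigma(\lambda)\},\qquad
\sigma(\lambda)\gtrsim_\Omega \lambda\, e^{\alpha\lambda/(2C_1)} .
\]
Thus everything is reduced to an estimate of the form $\mathcal H^\beta_\infty(\{x\in\Omega: Mf(x)>\sigma\})\le C_\Omega\,\sigma^{-q}$ with $q>0$, which upon substituting $\sigma=\sigma(\lambda)$ yields the sought exponential decay of the distribution function.

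The heart of the matter is therefore the weak--type inequality
\[
\mathcal H^\beta_\infty\big(\{x\in\Omega: Mf(x)>\sigma\}\big)\le C(\alpha,\beta,n,p,\Omega)\,\sigma^{-q},
\qquad q:=p-\frac{n-\beta}{\alpha},
\]
for $\sigma$ large (for bounded $\sigma$ one uses the trivial bound $\mathcal H^\beta_\infty(\Omega)$); observe that $q>0$ precisely when $\beta>n-\alpha p$, which is exactly the hypothesis. I would prove it by combining: (i) the classical weak--$(p,p)$ bound, controlling the \emph{Lebesgue} measure $|\{Mf>\sigma\}|\lesssim \sigma^{-p}\|f\|_{L^p(\Omega)}^p\lesssim_\Omega \sigma^{-p}$ (here $f$ has bounded support, so $f\in L^p$ with norm controlled by $\|f\|_{\mathcal M^\alpha_p}$); (ii) the observation that any ball $B(x,r)$ with $\tfrac1{|B(x,r)|}\int_{B(x,r)}f>\sigma$ satisfies $r\le L:=C\sigma^{-1/\alpha}$, by H\"older and the Morrey condition; and (iii) the Morrey growth $\int_Q f^p\le\ell(Q)^{n-\alpha p}$ at \emph{every} scale. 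The set $\{Mf>\sigma\}$ is open and is a union of balls of radius $\le L$ of small total volume; extracting a Besicovitch subfamily and re--grouping those balls into a cover at scale $L$ — the number of $L$--cubes being controlled through (i), and any cluster of sub--$L$ balls being pinned down by the Morrey bound (iii) on the enclosing $L$--cube — produces a cover of $\mathcal H^\beta_\infty$--cost $\lesssim_\Omega L^{-n}\,|\{Mf>\sigma\}|\cdot L^\beta=|\{Mf>\sigma\}|\,L^{\beta-n}\lesssim_\Omega \sigma^{-q}$, the exponent $-q$ arising from balancing $L^\beta$ against the volume bound. (The extremal profile $f\sim\mathrm{dist}(\cdot,P)^{-\alpha}$ concentrated near an $(n-\alpha p)$--dimensional affine subspace $P$, for which $I_\alpha f\equiv+\infty$ on $P$ while $\mathcal H^{\beta'}_\infty(P)>0$ for $\beta'\le n-\alpha p$, confirms both the sharpness of $q$ and the necessity of $\beta>n-\alpha p$.)

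The main obstacle is this last covering argument: the crude subadditivity bound $\mathcal H^\beta_\infty(\{Mf>\sigma\})\le\sum_i r_i^\beta$ over the Besicovitch balls is \emph{not} by itself sufficient — a profusion of small balls can make that sum large — so the Morrey condition must be exploited at all scales to merge small balls into an $L$--scale cover. Equivalently one passes to the comparable dyadic Hausdorff content and runs a Calder\'on--Zygmund stopping--time argument on the cubes $\{Q:\tfrac1{|Q|}\int_Q f>\sigma\}$, or dualizes against Frostman measures $\mu$ with $\|\mu\|_{\mathcal M^\beta(\mathbb R^n)}\le 1$ and proves the trace bound $\mu(\{Mf>\sigma\})\lesssim_\Omega\sigma^{-q}$ uniformly in $\mu$; these are the routes available from \cite{Adams_1975,Adams_2015}. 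The remaining steps — the layer--cake identity and the Hedberg truncation — are routine.
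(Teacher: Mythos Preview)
The paper does not supply a self-contained proof of Theorem~\ref{Adams2015theorem}; it is quoted as the Adams--Xiao result and then \emph{re-derived} (for $1\le p<n/\alpha$) as a corollary of the stronger Theorem~\ref{MainMorreyTheorem}: compact support of $f\in\mathcal M^\alpha_p$ forces $I_\alpha f\in L^1_{\mathrm{loc}}$, Theorem~\ref{MainMorreyTheorem} then places $I_\alpha f$ in $BMO^\beta$ for every $\beta\in(n-\alpha p,n]$, and the exponential bound \eqref{expAdams2015} follows from the John--Nirenberg inequality \eqref{betaexpintegral} for $BMO^\beta$ (the centering constant $c_Q$ is absorbed using the compact support of $f$). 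Your route is genuinely different: it is the direct Hedberg--Adams argument --- truncate the kernel, reduce to a level-set estimate for a maximal function measured by $\mathcal H^\beta_\infty$ --- and never passes through the $BMO^\beta$ machinery. The paper's route is more conceptual (exponential integrability as a consequence of bounded $\beta$-dimensional oscillation); yours is closer to the original proof in \cite{Adams_2015}.

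Your Hedberg reduction and the layer-cake step are correct, and the obstacle you flag is genuine: the covering argument you sketch --- regrouping Besicovitch balls at scale $L\sim\sigma^{-1/\alpha}$ and counting $L$-cubes through the Lebesgue-measure bound from weak-$(p,p)$ --- does not close as written, since small Lebesgue measure does not control the number of $L$-cubes needed, and invoking the Morrey bound on a single $L$-cube only reproduces the constraint $\ell(Q_j)\le L$ you already have. The Frostman-duality alternative you mention at the end is the clean fix, and it is exactly how the paper handles the local piece of $I_\alpha f$ in Lemmas~\ref{Morre1ypreciseestimate}--\ref{Morreyforplemma}: replace $Mf$ by the \emph{fractional} maximal function $\mathcal M_\eta f$ already in the Hedberg step (the near part becomes $C\delta^{\alpha-\eta}\mathcal M_\eta f(x)$), and then invoke the Adams--Sawyer inequality of Lemma~\ref{Adamslazylemma} with $\eta=(n-\beta)/p$, which is admissible precisely when $\beta>n-\alpha p$. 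This bypasses the covering difficulty entirely and yields the exponential decay with no further work.
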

Here $I_\alpha f $ denotes the Riesz potential of order $\alpha $  of the function $f$, which is defined as
\begin{align*}
I_\alpha f(x):= \frac{1}{\gamma(\alpha)} \int_{\mathbb{R}^n} \frac{f(y)}{|x-y|^{n-\alpha}} \; dy
\end{align*}
where $\gamma(\alpha)$ is a normalization constant (see \cite[p.~117]{S}). An interesting connection to earlier findings by Adams \cite[Corollary (i)]{Adams_1975} emerges when $p=1$ in Theorem \ref{Adams2015theorem}. It gives a necessary and sufficient condition for the Riesz potential of a non-negative function lies in the $BMO(\mathbb{R}^n)$ space:

\begin{theorem}[Adams Corollary (i) \cite{Adams_1975}]\label{Adamsequivalent}
Let $f$ be a non-negative measurable function.  Then
\begin{align}\label{Adamsequivalence1975}
I_\alpha f \in BMO(\mathbb{R}^n) \text{ if and only if } f \in \mathcal{M}_1^\alpha(\mathbb{R}^n) \text{ and } I_\alpha f \in L^1_{\text{loc}}(\mathbb{R}^n),
\end{align}
and in this case we have
\begin{align*}
    \Vert I_\alpha f \Vert_{BMO(\mathbb{R}^n)} \cong  \Vert f \Vert_{\mathcal{M}^\alpha_1 (\mathbb{R}^n)}.
\end{align*}

\end{theorem}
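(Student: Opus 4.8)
The plan is to prove the two implications of \eqref{Adamsequivalence1975} separately, each yielding one half of the stated norm equivalence. Write $A:=\Vert f\Vert_{\mathcal{M}^\alpha_1(\R^n)}$, $B:=\Vert I_\alpha f\Vert_{BMO(\R^n)}$, and for a cube $Q$ put $\ell:=\ell(Q)$ with centre $x_Q$. First I would dispose of the $L^1_{\mathrm{loc}}$ bookkeeping: $I_\alpha f\in BMO(\R^n)$ trivially forces $I_\alpha f\in L^1_{\mathrm{loc}}(\R^n)$, while if $\int_{2Q}f=+\infty$ for some $Q$ then $f\geq0$ and $|x-y|\leq 2\sqrt n\,\ell$ for $x,y\in 2Q$ give $I_\alpha f\equiv+\infty$ on $2Q$; hence whenever $I_\alpha f\in L^1_{\mathrm{loc}}(\R^n)$ one automatically has $\int_{NQ}f<\infty$ for every $N$ and every $Q$, which is what makes the estimates below meaningful.

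For the implication ``$f\in\mathcal{M}^\alpha_1$ and $I_\alpha f\in L^1_{\mathrm{loc}}\Rightarrow I_\alpha f\in BMO$ with $B\lesssim A$'' I would run the usual Calder\'on--Zygmund splitting. Fix $Q$, a dimensional constant $N=N(n)$ (say $N=4\sqrt n$), and write $f=f_1+f_2$ with $f_1:=f\chi_{NQ}$. For the local part, Fubini together with $\int_Q|x-y|^{-(n-\alpha)}\,dx\lesssim \ell^{\alpha}$ (the integrand has an integrable singularity and $|x-y|\lesssim\ell$ on the domain of integration) gives $\tfrac1{|Q|}\int_Q I_\alpha f_1\lesssim \ell^{\alpha-n}\int_{NQ}f\lesssim A$. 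For the far part, the mean value inequality $\big||x-y|^{-(n-\alpha)}-|x_Q-y|^{-(n-\alpha)}\big|\lesssim \ell\,|y-x_Q|^{-(n-\alpha+1)}$ for $x\in Q$, $y\notin NQ$, summed over the dyadic annuli $2^{j+1}NQ\setminus 2^jNQ$ and using $\int_{2^{j+1}NQ}f\leq(2^{j+1}N\ell)^{n-\alpha}A$, gives $|I_\alpha f_2(x)-I_\alpha f_2(x')|\lesssim A$ for a.e.\ $x,x'\in Q$. Choosing $c_Q:=\tfrac1{|Q|}\int_Q I_\alpha f_2$ (finite by the local estimate and $I_\alpha f\in L^1_{\mathrm{loc}}$) and adding the two bounds yields $\tfrac1{|Q|}\int_Q|I_\alpha f-c_Q|\lesssim A$, hence $B\lesssim A$.

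The substantive direction is ``$I_\alpha f\in BMO\Rightarrow f\in\mathcal{M}^\alpha_1$ with $A\lesssim B$'', where I would argue by a hands-on form of $H^1$--$BMO$ duality. Since there is no nontrivial nonnegative function in $H^1$, the idea is instead to build, for each cube $Q$, a mean-zero test function whose Riesz potential dominates $\chi_Q$: fix $\psi\in C_c^\infty$ with $\chi_{Q_1}\leq\psi\leq\chi_{2Q_1}$ on the unit cube $Q_1$, set $\varphi:=\laps{\alpha}\psi$ — which is smooth, has $\int\varphi=0$, and satisfies $|\varphi(z)|\lesssim(1+|z|)^{-n-\alpha}$ since $\widehat\varphi(\xi)=|\xi|^{\alpha}\widehat\psi(\xi)$ vanishes at the origin — and for general $Q$ put $\psi_Q(x):=\psi((x-x_Q)/\ell)$, $\varphi_Q(x):=\ell^{-\alpha}\varphi((x-x_Q)/\ell)=\laps{\alpha}\psi_Q(x)$, so that $I_\alpha\varphi_Q=\psi_Q\geq\chi_Q$. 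Since $f\geq0$, Fubini (legitimate by $\int_{2Q}f<\infty$ together with the fast decay of $\varphi_Q$, which makes $\int f\cdot I_\alpha|\varphi_Q|$ finite) gives $\int_Q f\leq\int_{\R^n}f\,\psi_Q=\int_{\R^n}I_\alpha f\cdot\varphi_Q=\int_{\R^n}(I_\alpha f-c)\varphi_Q$ for every $c\in\R$, the last step using $\int\varphi_Q=0$. Taking $c:=\tfrac1{|2Q|}\int_{2Q}I_\alpha f$, splitting $\R^n$ into $2Q$ and the annuli $2^{j+1}Q\setminus 2^jQ$, and using on the $j$-th one both $|\varphi_Q|\lesssim\ell^{-\alpha}2^{-j(n+\alpha)}$ and the standard telescoping estimate $\int_{2^{j+1}Q}|I_\alpha f-c|\lesssim (j+1)(2^j\ell)^n B$, I would sum to obtain $\int_Q f\lesssim \ell^{n-\alpha}B\sum_{j\geq0}(j+1)2^{-j\alpha}\lesssim \ell^{n-\alpha}B$, i.e.\ $\ell^{\alpha-n}\int_Q f\lesssim B$; the supremum over $Q$ gives $A\lesssim B$.

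I expect the second direction to be the real obstacle, and the pitfall it circumvents is instructive: from the pointwise bound $I_\alpha f\geq c_{n,\alpha}\,\ell^{\alpha-n}\int_Q f$ on $Q$ one can conclude nothing, because $(I_\alpha f)_Q$ is not comparable to $B$ in general — averages of Riesz potentials over large cubes can be arbitrarily large while $A$ stays bounded (spread the mass of $f$ over many dyadic scales). The duality argument sidesteps this, but at the cost of having to control the tail of $\varphi_Q=\laps{\alpha}\psi_Q$: a compactly supported function cannot have Fourier transform vanishing to a non-integer order at the origin, so $\varphi_Q$ genuinely carries an unavoidable $|x|^{-n-\alpha}$ tail, and the annular bookkeeping above (paired with the $BMO$ telescoping) is precisely what handles it. The first direction is, by contrast, routine.
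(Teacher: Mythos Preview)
The paper does not give its own proof of this theorem; it is quoted from Adams \cite{Adams_1975}, with the accompanying Remark pointing to Propositions~3.2 and~3.4 of that paper for the two implications. So there is no in-paper argument to compare against.

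Your proof is essentially correct. The sufficiency direction is the standard Calder\'on--Zygmund split and matches what one finds in Adams (and, in the $\beta$-dimensional setting, what the present paper does in the proof of Theorem~\ref{MainMorreyTheorem} via Lemmas~\ref{forcompactmorrey} and~\ref{outsidef2inMorrey}). For necessity, your $H^1$--$BMO$ duality route through the molecule $\varphi_Q=\laps{\alpha}\psi_Q$ is a valid and clean argument, and is genuinely different in spirit from Adams' original, which proceeds by a more direct comparison of averages of $I_\alpha f$ over $Q$ and over a large dilate. Your approach buys conceptual clarity --- you are literally testing $I_\alpha f$ against an $H^1$ molecule --- at the price of the tail bookkeeping you already flagged.

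The one place that needs tightening is the Fubini justification. You cite ``$\int_{2Q}f<\infty$ together with the fast decay of $\varphi_Q$'' as making $\int f\cdot I_\alpha|\varphi_Q|$ finite, but since $\int|\varphi_Q|>0$ one only has $I_\alpha|\varphi_Q|(y)\sim c|y|^{-(n-\alpha)}$ at infinity, so what you actually need is $\int_{|y|>N}f(y)|y|^{-(n-\alpha)}\,dy<\infty$. This does hold: $I_\alpha f\in L^1_{\mathrm{loc}}$ with $f\geq0$ forces $I_\alpha f(x_0)<\infty$ for a.e.\ $x_0$, which is exactly that tail condition (compare the paper's Lemma~\ref{goodtermforlocalintegrabilityofRieszpotential}). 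Alternatively one can justify Fubini on the other side via $\int I_\alpha f\cdot|\varphi_Q|<\infty$, pairing the logarithmic growth of BMO averages against the $(n+\alpha)$-order decay of $\varphi_Q$ --- the same annular sum you perform afterwards. Either way the gap is cosmetic and the argument goes through.
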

\begin{remark}
In here, we note that the assumption $I_\alpha f \in L^1_{\text{loc}} (\mathbb{R}^n)$ on the left-hand side of (\ref{Adamsequivalence1975}) is omitted in the statement of \cite[Corollary (i)]{Adams_1975}. Yet there exists a function $f \in L^{n/\alpha} (\mathbb{R}^n) \subseteq \mathcal{M}^\alpha_1 (\mathbb{R}^n)$ for which $I_\alpha f = \infty$ everywhere in $\mathbb{R}^n$ (see \cite[p.~432]{MR3381284} for example), one sees that such an assumption is needed. In fact, the proof of \cite[Proposition 3.4]{Adams_1975} regarding sufficiency is unaffected, and the assumption $I_\alpha f \in L^1_{\text{loc}}(\mathbb{R}^n)$ is introduced in \cite[Proposition 3.2]{Adams_1975} to establish the necessary condition for (\ref{Adamsequivalence1975}).
\end{remark}
The first result in this paper is a futher improvement of exponetial integrability in Theorem \ref{Adams2015theorem}, which shows the estimate (\ref{expAdams2015}) can actually be derived from John-Nirenberg inequality (\ref{betaexpintegral}) for  bounded $\beta$-dimensional mean oscillation functions.

\begin{theorem}\label{MainMorreyTheorem}
  Let $0<\alpha < n$ and $1\leq p < \frac{n}{\alpha}$.  If $f \in \mathcal{M}^{\alpha}_p(\mathbb{R}^n)$ and $I_\alpha f \in L^1_{loc}(\mathbb{R}^n)$, then for $ \beta \in (n- \alpha p  , \;n]$, there exists a constant $C>0$ depending on $n$, $\alpha$, $\beta$, $p$ such that
  \begin{align*}
      \Vert I_\alpha f \Vert_{BMO^{\beta}(\mathbb{R}^n)} \leq C \Vert f \Vert_{\mathcal{M}^\alpha_p (\mathbb{R}^n)}. 
  \end{align*}
In particular, there exist constants $\gamma , C_0 $ independent of $f$ and $c_Q \in \mathbb{R}$ depending on $Q$ such that
\begin{align*}
    \int_Q \exp{ \left(   \frac{\gamma \left |I_\alpha f -c_Q \right| }{ \Vert f \Vert_{\mathcal{M}^\alpha_p (\mathbb{R}^n)}}\right) \; d \mathcal{H}^\beta_\infty } \leq C_0 \ell(Q)^\beta 
\end{align*}
holds for every cube $Q \subseteq \mathbb{R}^n$.
\end{theorem}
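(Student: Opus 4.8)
The plan is to verify the definition of $BMO^\beta$ directly and then read off the exponential integrability. Fix a cube $Q$; the goal is to produce a constant $c_Q \in \mathbb{R}$ with
\[
\int_Q |I_\alpha f - c_Q| \, d\mathcal{H}^\beta_\infty \leq C\, \ell(Q)^\beta \, \Vert f \Vert_{\mathcal{M}^\alpha_p(\mathbb{R}^n)},
\]
where $C$ does not depend on $Q$. Taking the supremum over $Q$ gives the asserted norm bound, and the ``in particular'' clause follows at once: insert $u = I_\alpha f$ into \eqref{jninequality_content_prime}, integrate as in \eqref{betaexpintegral}, and choose $\gamma$ a fixed multiple of $1/\Vert I_\alpha f\Vert_{BMO^\beta(\mathbb{R}^n)}$, hence of $1/\Vert f\Vert_{\mathcal{M}^\alpha_p(\mathbb{R}^n)}$. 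So everything reduces to the displayed localized estimate (we take $f \geq 0$, as in the rest of the paper). Fix $Q$, set $R := 4\sqrt{n}$, and split $f = f_1 + f_2$ with $f_1 := f\chi_{RQ}$ and $f_2 := f\chi_{\mathbb{R}^n \setminus RQ}$, so $I_\alpha f = I_\alpha f_1 + I_\alpha f_2$; let $x_Q$ be the centre of $Q$ and put $c_Q := I_\alpha f_2(x_Q)$. This is the one point where the hypothesis $I_\alpha f \in L^1_{\mathrm{loc}}(\mathbb{R}^n)$ is used: $f_1 \in L^p$ with $p < n/\alpha$, so $I_\alpha f_1$ is finite a.e., whence $I_\alpha f_2 = I_\alpha f - I_\alpha f_1$ is finite a.e.\ on $Q$; since $f_2$ vanishes near $Q$, $I_\alpha f_2$ is then continuous (indeed Lipschitz) on a neighbourhood of $Q$ once it is finite at a single point, so $c_Q$ is a genuine real number. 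By subadditivity of the Choquet integral with respect to $\mathcal{H}^\beta_\infty$ (Hausdorff content is strongly subadditive),
\[
\int_Q |I_\alpha f - c_Q| \, d\mathcal{H}^\beta_\infty \leq \int_Q |I_\alpha f_1| \, d\mathcal{H}^\beta_\infty + \int_Q |I_\alpha f_2 - c_Q| \, d\mathcal{H}^\beta_\infty,
\]
so it suffices to bound each term by $C\,\ell(Q)^\beta\,\Vert f\Vert_{\mathcal{M}^\alpha_p(\mathbb{R}^n)}$.

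For the far term I would use a standard smoothing estimate. For $x \in Q$ and $y \notin RQ$ we have $|x - x_Q| \leq \frac{\sqrt{n}}{2}\ell(Q) \leq \frac14|x_Q - y|$, so the mean value theorem for $z \mapsto |z|^{-(n-\alpha)}$ gives $\big|\,|x-y|^{-(n-\alpha)} - |x_Q - y|^{-(n-\alpha)}\,\big| \lesssim \ell(Q)\,|x_Q - y|^{-(n-\alpha+1)}$. Breaking $\mathbb{R}^n\setminus RQ$ into dyadic annuli and using Hölder's inequality together with \eqref{defMorrey} in the form $\int_{2^{k+1}Q} f \lesssim (2^k\ell(Q))^{n/p'}(2^k\ell(Q))^{n/p-\alpha}\Vert f\Vert_{\mathcal{M}^\alpha_p(\mathbb{R}^n)} = (2^k\ell(Q))^{n-\alpha}\Vert f\Vert_{\mathcal{M}^\alpha_p(\mathbb{R}^n)}$, I obtain
\[
|I_\alpha f_2(x) - c_Q| \lesssim \ell(Q)\sum_{k}\frac{(2^k\ell(Q))^{n-\alpha}}{(2^k\ell(Q))^{n-\alpha+1}}\,\Vert f\Vert_{\mathcal{M}^\alpha_p(\mathbb{R}^n)} = \Vert f\Vert_{\mathcal{M}^\alpha_p(\mathbb{R}^n)}\sum_k 2^{-k} \lesssim \Vert f\Vert_{\mathcal{M}^\alpha_p(\mathbb{R}^n)},
\]
the extra factor $2^{-k}$ furnished by the gradient being what makes the series converge even though $f$ need not be integrable at infinity. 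Hence $\Vert I_\alpha f_2 - c_Q\Vert_{L^\infty(Q)} \lesssim \Vert f\Vert_{\mathcal{M}^\alpha_p(\mathbb{R}^n)}$, and since $\mathcal{H}^\beta_\infty(Q) \lesssim \ell(Q)^\beta$ we get $\int_Q |I_\alpha f_2 - c_Q| \, d\mathcal{H}^\beta_\infty \leq \Vert I_\alpha f_2 - c_Q\Vert_{L^\infty(Q)}\,\mathcal{H}^\beta_\infty(Q) \lesssim \ell(Q)^\beta\,\Vert f\Vert_{\mathcal{M}^\alpha_p(\mathbb{R}^n)}$.

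The near term is the heart. Since $\beta > n - \alpha p$, the exponent $q := \beta p/(n - \alpha p)$ satisfies $q > p \geq 1$. The idea is to invoke the Hausdorff-content refinement of the Hardy--Littlewood--Sobolev inequality of Adams, namely $\big(\int_{\mathbb{R}^n}(I_\alpha g)^q\,d\mathcal{H}^\beta_\infty\big)^{1/q} \leq C\Vert g\Vert_{L^p(\mathbb{R}^n)}$ for non-negative $g$ (see \cite{Adams_2015} and the references therein), applied to $g = f_1$, and then to localize it by a layer-cake argument: for $h \geq 0$, writing $\int_Q h\,d\mathcal{H}^\beta_\infty = \int_0^\infty \mathcal{H}^\beta_\infty(\{x \in Q : h(x) > t\})\,dt$, cutting at a level $T$, using $t^q\,\mathcal{H}^\beta_\infty(\{h > t\}) \leq \int_{\mathbb{R}^n} h^q\,d\mathcal{H}^\beta_\infty$ and $q > 1$, and optimizing in $T$, one gets $\int_Q h\,d\mathcal{H}^\beta_\infty \lesssim \big(\int_{\mathbb{R}^n} h^q\,d\mathcal{H}^\beta_\infty\big)^{1/q}\,\mathcal{H}^\beta_\infty(Q)^{1 - 1/q}$. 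Applying this to $h = I_\alpha f_1$, and using $\mathcal{H}^\beta_\infty(Q) \lesssim \ell(Q)^\beta$ and the Morrey bound $\Vert f_1\Vert_{L^p(\mathbb{R}^n)} \leq |RQ|^{1/p}\big(|RQ|^{-1}\int_{RQ} f^p\big)^{1/p} \lesssim \ell(Q)^{n/p - \alpha}\Vert f\Vert_{\mathcal{M}^\alpha_p(\mathbb{R}^n)}$, I arrive at
\[
\int_Q |I_\alpha f_1|\,d\mathcal{H}^\beta_\infty \lesssim \ell(Q)^{n/p - \alpha}\,\ell(Q)^{\beta(1 - 1/q)}\,\Vert f\Vert_{\mathcal{M}^\alpha_p(\mathbb{R}^n)},
\]
and the choice $q = \beta p/(n-\alpha p)$ is exactly what makes $\frac{n}{p} - \alpha + \beta\big(1 - \frac{1}{q}\big) = \beta$, so the right-hand side is $C\,\ell(Q)^\beta\,\Vert f\Vert_{\mathcal{M}^\alpha_p(\mathbb{R}^n)}$. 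Together with the far-term bound this proves the norm estimate, hence the theorem.

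I expect the main obstacle to be precisely the near-term bound: everything else --- the dyadic/gradient control of $I_\alpha f_2$, the subadditivity of the Choquet integral, the layer-cake passage from $L^q(\mathcal{H}^\beta_\infty)$ to $L^1(\mathcal{H}^\beta_\infty, Q)$, and the bookkeeping of exponents --- is routine once Adams' trace inequality $I_\alpha\colon L^p \to L^q(\mathcal{H}^\beta_\infty)$ is in hand. If one wants a self-contained treatment, that inequality can be replaced by the weak-type estimate $\mathcal{H}^\beta_\infty(\{x : I_\alpha f_1(x) > t\}) \lesssim (\Vert f_1\Vert_{L^p(\mathbb{R}^n)}/t)^q$, proved by a Frostman-type stopping-time and covering argument on the level set $\{I_\alpha f_1 > t\}$; this is the step that genuinely consumes the hypothesis $\beta > n - \alpha p$ (it is what forces $q > p$), and it is where I would expect to spend the most care. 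Finally, as noted above, the exponential integrability is immediate from \eqref{betaexpintegral} applied to $u = I_\alpha f$ after absorbing $\Vert I_\alpha f\Vert_{BMO^\beta(\mathbb{R}^n)}$ into the constant $\gamma$.
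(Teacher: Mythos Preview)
Your argument is correct and follows the same architecture as the paper's: split $f$ into near and far parts relative to $2Q$ (you use $RQ$, which is cosmetic), handle the far piece by the mean-value/dyadic-annuli estimate (this is the paper's Lemma~\ref{outsidef2inMorrey}, and your computation matches it essentially line for line), and control the near piece by a trace inequality into a Choquet space followed by localization; the exponential clause is then read off from \eqref{betaexpintegral} exactly as you say.

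The one substantive difference is the tool used for the near piece. You invoke Adams' $I_\alpha\colon L^p \to L^q(\mathcal{H}^\beta_\infty)$ with $q = \beta p/(n-\alpha p)$ as a black box and then localize by a layer-cake H\"older step. The paper instead uses the pointwise bound $|I_\alpha f_1(x)| \lesssim \ell(Q)^{\alpha-\eta}\mathcal{M}_\eta f_1(x)$ on $Q$ (valid because $\operatorname{supp} f_1 \subset 2Q$) together with the fractional-maximal trace estimate $\|\mathcal{M}_\eta g\|_{L^p(\mathcal{H}^\beta_\infty)} \lesssim \|g\|_{L^p}$ for $n-\eta p = \beta$ (Lemma~\ref{Adamslazylemma}(i)), followed by the H\"older inequality for Choquet integrals. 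This route buys two things. First, it handles $p=1$ uniformly via the weak-type maximal bound plus Lorentz-scale H\"older; you correctly flag that your strong-type $I_\alpha$ inequality fails at $p=1$ and that the weak-type substitute suffices, so this is only a matter of presentation. Second, and more to the point, the paper uses the same $L^p$-approximation of $f_1$ by $C_c^\infty$ functions to verify that $I_\alpha f_1$ is $\mathcal{H}^\beta_\infty$-\emph{quasicontinuous} (Lemmas~\ref{forcompactmorrey} and~\ref{Morreyforplemma}); since membership in $BMO^\beta$ here is defined via $L^1_{\mathrm{loc}}(\mathcal{H}^\beta_\infty)$, which requires quasicontinuity, this is part of the statement. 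Your write-up supplies the size estimate but omits this regularity check; it is easy to patch (the same approximation argument works with your $L^q(\mathcal{H}^\beta_\infty)$ bound), but it should be said.
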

As $f \in \mathcal{M}^\alpha_p (\mathbb{R}^n)$ having compact support implies $I_\alpha f \in L^1_{loc } (\mathbb{R}^n)$ (see \cite[p.~436]{MR3381284} for example), one easily sees that Theorem \ref{MainMorreyTheorem} refines Theorem \ref{Adams2015theorem}.

Motivated by Theorem \ref{MainMorreyTheorem}, one may inquire into the specific function space conditions under which a function $f$ belongs to, leading to $I_\alpha f \in BMO^\beta(\mathbb{R}^n)$ for any $\beta \in (0, n]$.  Inspired by \cite[Corollary 1.6]{MD_2021}, which asserts that for every open bounded set $\Omega \subseteq \mathbb{R}^n$ and $\beta \in (0 ,n]$, there exist constants $\gamma,\;C >0$ that depend on $n,$ $\alpha$, $\beta$ and $\Omega$ such that 
\begin{equation}
\int_\Omega \exp\left( \frac{\gamma |I_\alpha f|} { \Vert  f \Vert_{L^{n/ \alpha,\infty}(\Omega)}} \right) d\mathcal{H}^\beta_\infty \leq C,
\end{equation}
holds for $f \in L^{n / \alpha,\infty}(\Omega)$ with $\supp{f} \subseteq \Omega$, we introduce our second result. This result is a theorem analogous to Theorem \ref{MainMorreyTheorem} but concerns the scale of $f \in L^{n/ \alpha , \infty}(\mathbb{R}^n)$. The proof can be closely modeled after that of Theorem \ref{MainMorreyTheorem}. 

\begin{theorem}\label{potential_estimate}
Suppose that $f \in L^{n/\alpha,\infty}(\mathbb{R}^n)$ and $I_\alpha f \in L^1_{loc}(\mathbb{R}^n)$. Then there exists a constant $C= C(n,\alpha, \beta)$ such that
\begin{align*}
  \Vert  I_\alpha f  \Vert_{BMO^\beta(\mathbb{R}^n)} \leq C \Vert f \Vert_{L^{n/\alpha,\infty}(\mathbb{R}^n)}
\end{align*}
In particular, there exist constants $\gamma, C_0>0$, independent of $f$, and $c_Q \in \mathbb{R}$ depending on $Q$ such that
\begin{align*}
    \int_Q \exp{ \left(   \frac{\gamma \left |I_\alpha f -c_Q \right| }{ \Vert f \Vert_{L^{n/ \alpha ,\infty}(\mathbb{R}^n) }}\right) \; d \mathcal{H}^\beta_\infty } \leq C_0 \ell(Q)^\beta 
\end{align*}
holds for every cube $Q \subseteq \mathbb{R}^n$. 
\end{theorem}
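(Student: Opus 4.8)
Following the blueprint of the proof of Theorem \ref{MainMorreyTheorem}, the plan is first to prove the norm bound $\Vert I_\alpha f\Vert_{BMO^\beta(\mathbb{R}^n)}\le C(n,\alpha,\beta)\,\Vert f\Vert_{L^{n/\alpha,\infty}(\mathbb{R}^n)}$ for every $\beta\in(0,n]$; the exponential integrability then follows immediately by feeding $u=I_\alpha f$ into the John--Nirenberg inequality \eqref{jninequality_content_prime} (equivalently \eqref{betaexpintegral}), with $\gamma$ a fixed multiple of $1/C$ independent of $f$. So fix a cube $Q$ with centre $x_Q$ and side $\ell=\ell(Q)$, put $M=\Vert f\Vert_{L^{n/\alpha,\infty}(\mathbb{R}^n)}$, and split $f=f\chi_{2Q}+f\chi_{\mathbb{R}^n\setminus 2Q}=:f_1+f_2$ (recall $f\ge 0$). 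Since $f_1\in L^1(\mathbb{R}^n)$ by the Lorentz--Hölder estimate $\Vert f\chi_E\Vert_{L^1}\le CM|E|^{1-\alpha/n}$, we have $I_\alpha f_1\in L^1_{loc}$, so together with the hypothesis $I_\alpha f\in L^1_{loc}$ the function $I_\alpha f_2=I_\alpha f-I_\alpha f_1$ is finite a.e.; moreover, for $x\in Q$ the only possible source of divergence of $I_\alpha f_2(x)$ is the far tail $\int_{|y-x_Q|>R}|x-y|^{\alpha-n}f(y)\,dy$, which is comparable for all $x\in Q$, so $I_\alpha f_2$ is in fact finite at every point of $Q$. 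We take $c_Q:=I_\alpha f_2(x_Q)$.

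For the far term, a mean value estimate gives $\bigl||x-y|^{\alpha-n}-|x_Q-y|^{\alpha-n}\bigr|\le C\ell\,|x_Q-y|^{\alpha-n-1}$ for $x\in Q$, $y\notin 2Q$, hence $|I_\alpha f_2(x)-c_Q|\le C\ell\int_{|x_Q-y|\gtrsim\ell}|x_Q-y|^{\alpha-n-1}f(y)\,dy$. Decomposing $\mathbb{R}^n\setminus 2Q$ into dyadic annuli around $x_Q$ and applying the Lorentz--Hölder estimate on each (its measure is $\approx(2^k\ell)^n$) gives the convergent series $C\ell\sum_{k\ge 0}(2^k\ell)^{\alpha-n-1}M(2^k\ell)^{n-\alpha}=CM\sum_{k\ge 0}2^{-k}\le C'M$. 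Thus $|I_\alpha f_2-c_Q|\le C'M$ on all of $Q$, and since $\mathcal{H}^\beta_\infty(Q)\le C\ell^\beta$ we obtain $\int_Q|I_\alpha f_2-c_Q|\,d\mathcal{H}^\beta_\infty\le CM\ell^\beta$.

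The near term $\int_Q I_\alpha f_1\,d\mathcal{H}^\beta_\infty$ is the crux. Split $f_1=f_1\chi_{\{f_1\le M\ell^{-\alpha}\}}+f_1\chi_{\{f_1>M\ell^{-\alpha}\}}=:f_1^{\flat}+f_1^{\sharp}$. Since $f_1^{\flat}\le M\ell^{-\alpha}$ is supported in $2Q$ and $\int_{2Q}|x-y|^{\alpha-n}\,dy\le C\ell^\alpha$ for every $x$, we get $\Vert I_\alpha f_1^{\flat}\Vert_{L^\infty}\le CM$, whence $\int_Q I_\alpha f_1^{\flat}\,d\mathcal{H}^\beta_\infty\le CM\ell^\beta$. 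For $f_1^{\sharp}$ decompose by dyadic level sets, $f_1^{\sharp}=\sum_{j>j_0}g_j$ with $g_j=f_1\chi_{A_j}$, $A_j=\{2^j\le f_1<2^{j+1}\}$ and $2^{j_0}\approx M\ell^{-\alpha}$; then $|A_j|\le C\,2^{-jn/\alpha}M^{n/\alpha}$, $A_j\subseteq 2Q$, and (exactly as for $f_1^{\flat}$, using boundedness plus the support in $2Q$) $\Vert I_\alpha g_j\Vert_{L^\infty}\le CM$. The remaining ingredient is an Adams-type bound for characteristic functions supported in $2Q$: $\int_Q I_\alpha\chi_E\,d\mathcal{H}^\beta_\infty\le C\bigl(|E|^{(\alpha+\beta)/n}+|E|\,\ell^{\alpha+\beta-n}\bigr)$ for $E\subseteq 2Q$ (a logarithm replaces this at $\beta=n-\alpha$), which follows from the layer-cake formula together with the elementary pointwise bounds on $I_\alpha\chi_E$ and a covering argument; crucially, the $L^\infty$ bound on $I_\alpha g_j$ lets us truncate the layer-cake integral at height $CM$, which is exactly what keeps the estimate alive when $\beta\le n-\alpha$ (the point at which the Morrey proof only needed $\beta>n-\alpha p$). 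Applying this with $E=A_j$ and multiplying by $2^{j+1}$ bounds $\int_Q I_\alpha g_j\,d\mathcal{H}^\beta_\infty$ by $C\bigl(M^{(\alpha+\beta)/\alpha}2^{-j\beta/\alpha}+M^{n/\alpha}\ell^{\alpha+\beta-n}2^{j(1-n/\alpha)}\bigr)$; summing over $j>j_0$ — using that the Choquet integral against $\mathcal{H}^\beta_\infty$ is subadditive up to a dimensional constant (a consequence of the duality between $L^1(\mathcal{H}^\beta_\infty;\mathbb{R}^n)$ and $\mathcal{M}^\beta(\mathbb{R}^n)$ recalled in the Introduction) — yields two geometric series which, because $2^{j_0}\approx M\ell^{-\alpha}$, each sum to a constant multiple of $M\ell^\beta$. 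Adding the three contributions gives $\int_Q|I_\alpha f-c_Q|\,d\mathcal{H}^\beta_\infty\le CM\ell^\beta$, so $\ell^{-\beta}\inf_{c}\int_Q|I_\alpha f-c|\,d\mathcal{H}^\beta_\infty\le CM$; taking the supremum over $Q$ finishes the norm bound.

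The hard part is the near term: the reduction and the far term are routine, but the near term requires the bounds on the pieces $g_j$ to be uniform in $j$ and, unlike the Morrey analogue (which only needs $\beta>n-\alpha p$), to cover the whole range $\beta\in(0,n]$. A naive weak-type estimate of the form $\mathcal{H}^\beta_\infty(\{I_\alpha g>s\})\le C(\Vert g\Vert_{L^1}/s)^{\beta/(n-\alpha)}$ is false when $\beta<n-\alpha$, and it is precisely the compact support of the pieces together with the $L^\infty$ bounds $\Vert I_\alpha g_j\Vert_{L^\infty}\le CM$ that salvages the argument; making this quantitative, and handling the borderline exponent $\beta=n-\alpha$, is where the technical work concentrates.
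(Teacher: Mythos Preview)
Your treatment of the far piece $I_\alpha f_2$ matches the paper's (Lemma~\ref{outsidef2inMorrey}); the divergence is entirely in the near piece $I_\alpha f_1$, and there your approach has a genuine gap.

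The paper does \emph{not} decompose $f_1$ by level sets. It uses a one-line reduction: since $L^{n/\alpha,\infty}(\mathbb{R}^n)\subset\mathcal{M}^\alpha_p(\mathbb{R}^n)$ for every $1<p<n/\alpha$, given any $\beta\in(0,n]$ one simply picks $p$ close enough to $n/\alpha$ that $n-\alpha p<\beta$ and invokes Lemma~\ref{Morreyforplemma} (which rests on the pointwise bound $|I_\alpha f_1|\le C\ell(Q)^{\alpha-\eta}\mathcal{M}_\eta f_1$ together with the strong-type inequality $\|\mathcal{M}_\eta g\|_{L^p(\mathcal{H}^\beta_\infty)}\le A\|g\|_{L^p}$ from Lemma~\ref{Adamslazylemma}). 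A H\"older step in the Lorentz scale then converts $\|f_1\|_{L^p}$ into $C\ell(Q)^{n/p-\alpha}\|f\|_{L^{n/\alpha,\infty}}$; this is Lemma~\ref{preciseforweakLnalpha}. No $\beta$-dependent case analysis is needed, and in particular nothing special happens at $\beta=n-\alpha$.

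Your ``Adams-type bound''
\[
\int_Q I_\alpha\chi_E\,d\mathcal{H}^\beta_\infty\le C\bigl(|E|^{(\alpha+\beta)/n}+|E|\,\ell^{\alpha+\beta-n}\bigr),\qquad E\subseteq 2Q,
\]
is correct for $\beta>n-\alpha$ (the second term alone is exactly Lemma~\ref{Morre1ypreciseestimate}), but it is \emph{false} for $\beta<n-\alpha$, which is precisely the range where you claim it rescues the argument. Take $E$ to be $N$ disjoint balls of radius $r$ with centres on a $\beta$-dimensional plane inside $2Q$, separated by $d\gg r$, with $N\approx(\ell/d)^\beta$. For $t$ between $cr^nd^{\alpha-n}$ and $Cr^\alpha$ the level set $\{I_\alpha\chi_E>t\}$ is essentially $N$ disjoint balls of radius $\rho_t\approx(r^n/t)^{1/(n-\alpha)}$ lying on that plane, so $\mathcal{H}^\beta_\infty$ of the level set is $\gtrsim N\rho_t^\beta$; integrating gives $\int_Q I_\alpha\chi_E\,d\mathcal{H}^\beta_\infty\gtrsim Nr^{\alpha+\beta}$. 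Since $\alpha+\beta<n$, the ratio of $Nr^{\alpha+\beta}$ to each of $|E|^{(\alpha+\beta)/n}=N^{(\alpha+\beta)/n}r^{\alpha+\beta}$ and $|E|\ell^{\alpha+\beta-n}=Nr^n\ell^{\alpha+\beta-n}$ can be made arbitrarily large by sending $N\to\infty$ and $r/d\to 0$. Your level-set pieces $g_j$ can concentrate on sets of exactly this shape, so the summation over $j$ cannot be closed this way for $\beta\le n-\alpha$. (A minor point: the hypothesis of the theorem does not include $f\ge 0$.)
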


We now introduce the main result of this paper, which establishes a new necessary and sufficient condition for non-negative functions $f$ such that $I_\alpha f$ is in $BMO(\mathbb{R}^n)$.

\begin{theorem}\label{Mainresult}
      Let $f$ be a non-negative measurable function. Then
\begin{align*}
    I_\alpha f \in BMO(\mathbb{R}^n) \text{ if  and only if }I_\alpha f \in BMO^\beta (\mathbb{R}^n) \text{ for  $\beta \in (n-\alpha,n]$}. 
\end{align*}
In this case, we have
\begin{align*}
    \Vert I_\alpha f \Vert_{BMO(\mathbb{R}^n)} \cong \Vert I_\alpha f \Vert_{BMO^\beta(\mathbb{R}^n)} \text{ for  $\beta \in (n-\alpha,n]$}. 
\end{align*}

\end{theorem}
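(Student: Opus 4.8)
The plan is to prove the two directions separately, with the forward direction (from $BMO$ to $BMO^\beta$) being essentially a consequence of the already-developed machinery and the reverse direction (from $BMO^\beta$ to $BMO$) being the substantive new content. For the forward direction, suppose $I_\alpha f \in BMO(\mathbb{R}^n)$. By Theorem \ref{Adamsequivalent}, this forces $f \in \mathcal{M}^\alpha_1(\mathbb{R}^n)$ and $I_\alpha f \in L^1_{\text{loc}}(\mathbb{R}^n)$, with $\Vert f \Vert_{\mathcal{M}^\alpha_1} \cong \Vert I_\alpha f \Vert_{BMO}$. Taking $p=1$ in Theorem \ref{MainMorreyTheorem} — note the hypothesis $1 \leq p < n/\alpha$ is met — and using $\beta \in (n-\alpha p, n] = (n-\alpha, n]$, we obtain $I_\alpha f \in BMO^\beta(\mathbb{R}^n)$ with $\Vert I_\alpha f \Vert_{BMO^\beta} \leq C\Vert f \Vert_{\mathcal{M}^\alpha_1} \cong \Vert I_\alpha f \Vert_{BMO}$. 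So this direction reduces to citing the earlier results.

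For the reverse direction, suppose $I_\alpha f \in BMO^\beta(\mathbb{R}^n)$ for some $\beta \in (n-\alpha, n]$. In view of Theorem \ref{Adamsequivalent}, it suffices to show $f \in \mathcal{M}^\alpha_1(\mathbb{R}^n)$ with $\Vert f \Vert_{\mathcal{M}^\alpha_1} \leq C \Vert I_\alpha f \Vert_{BMO^\beta}$ (the condition $I_\alpha f \in L^1_{\text{loc}}$ comes for free, since $BMO^\beta$ functions are locally integrable with respect to $\mathcal{H}^\beta_\infty$, hence finite a.e., and one can compare Lebesgue integrability against Hausdorff content). So the heart of the matter is a lower bound: the $\beta$-dimensional mean oscillation of $I_\alpha f$ over a cube $Q$ controls from below the quantity $\ell(Q)^\alpha \cdot \ell(Q)^{-n} \int_Q f$. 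The natural strategy, mirroring Adams's original argument for the $BMO$ case, is: fix a cube $Q$ with center $x_0$ and side $\ell(Q)$; split $f = f\chi_{2Q} + f\chi_{(2Q)^c}$; observe that on $Q$ the far part $I_\alpha(f\chi_{(2Q)^c})$ is (up to an additive constant) nearly constant — its oscillation on $Q$ is bounded by $C \ell(Q)^\alpha \ell(Q)^{-n}\int_{2Q} f + (\text{tail terms controlled by } \Vert f\Vert_{\mathcal{M}^\alpha_1})$, so it contributes negligibly to the mean oscillation; hence the mean oscillation of $I_\alpha f$ over $Q$ is comparable to that of $I_\alpha(f\chi_{2Q})$. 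Then one needs a lower bound of the form
\begin{align*}
\inf_{c} \frac{1}{\ell(Q)^\beta}\int_Q |I_\alpha(f\chi_{2Q}) - c|\, d\mathcal{H}^\beta_\infty \;\geq\; c_0\, \ell(2Q)^{\alpha} \cdot \frac{1}{\ell(2Q)^n}\int_{2Q} f\, dy,
\end{align*}
which, combined over all $Q$, yields $f \in \mathcal{M}^\alpha_1$.

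The key estimate for that lower bound is that $I_\alpha(f\chi_{2Q})(x) \geq c\, \ell(Q)^{\alpha-n}\int_{2Q} f\,dy$ for $x$ in a sub-cube of $Q$ of comparable size — indeed, for $x, y \in 2Q$ one has $|x-y| \leq C\ell(Q)$, so $|x-y|^{\alpha-n} \geq c\,\ell(Q)^{\alpha-n}$, giving $I_\alpha(f\chi_{2Q})(x) \geq c\,\ell(Q)^{\alpha-n}\int_{2Q} f\, dy =: \lambda_Q$ for \emph{every} $x \in 2Q$, in particular on all of $Q$. Since $I_\alpha f \geq 0$ (as $f\geq 0$), I must be careful: a pointwise lower bound alone does not bound mean oscillation from below, because one could subtract $c = \lambda_Q$. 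The correct mechanism is to exploit the exponential/John–Nirenberg characterization \eqref{betaexpintegral}: if $\Vert I_\alpha f\Vert_{BMO^\beta}$ were small while $\lambda_Q$ were large, then by the argument above $I_\alpha(f\chi_{(2Q)^c})$ has small oscillation on $Q$, so $I_\alpha f \geq \lambda_Q - (\text{small}) - c_Q'$ can be arranged, but one also needs an \emph{upper} control on a definite portion of $Q$. The cleanest route is: use that for a Riesz potential of a nonnegative function the oscillation is genuinely two-sided because near the "concentration" of $f\chi_{2Q}$ the potential is large while away from it (but still in $Q$) it can be small — quantitatively, decompose $2Q$ dyadically and estimate $\int_Q |I_\alpha(f\chi_{2Q})|\,d\mathcal{H}^\beta_\infty$ from below by testing against the point where the potential is smallest on $Q$, or invoke the known equivalence $\Vert I_\alpha f\Vert_{BMO^\beta} \gtrsim \Vert I_\alpha f\Vert_{BMO^n} = \Vert I_\alpha f\Vert_{BMO}$ via \eqref{inclusionBMObeta} — wait, that inclusion goes the wrong way.

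I expect the main obstacle to be precisely this: \eqref{inclusionBMObeta} gives $\Vert u\Vert_{BMO^{\beta_2}} \leq C\Vert u\Vert_{BMO^{\beta_1}}$ for $\beta_1 \leq \beta_2$, so taking $\beta_1 = \beta$, $\beta_2 = n$ yields $\Vert I_\alpha f\Vert_{BMO} = \Vert I_\alpha f\Vert_{BMO^n} \leq C\Vert I_\alpha f\Vert_{BMO^\beta}$ \emph{for free} — this is the reverse direction, with no work at all! Thus the real content collapses: the reverse implication $I_\alpha f \in BMO^\beta \Rightarrow I_\alpha f \in BMO$ together with the norm comparison $\Vert I_\alpha f\Vert_{BMO} \leq C\Vert I_\alpha f\Vert_{BMO^\beta}$ is immediate from \eqref{inclusionBMObeta} applied with $\beta_1 = \beta$ and $\beta_2 = n$ (using $BMO^n = BMO$, since $\mathcal{H}^n_\infty$ is comparable to Lebesgue measure). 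Conversely the forward direction uses Theorem \ref{Adamsequivalent} and Theorem \ref{MainMorreyTheorem} with $p = 1$ as described in the first paragraph, giving $\Vert I_\alpha f\Vert_{BMO^\beta} \leq C\Vert I_\alpha f\Vert_{BMO}$. Chaining the two norm inequalities produces $\Vert I_\alpha f\Vert_{BMO} \cong \Vert I_\alpha f\Vert_{BMO^\beta}$, completing the proof. The only point demanding care is confirming $BMO^n(\mathbb{R}^n) = BMO(\mathbb{R}^n)$ with equivalent norms, i.e. that integration against $\mathcal{H}^n_\infty$ is comparable to Lebesgue integration on cubes, which is standard; and verifying that the constant $\beta$-threshold in Theorem \ref{MainMorreyTheorem} at $p=1$ indeed matches the range $(n-\alpha, n]$ in the statement, which it does.
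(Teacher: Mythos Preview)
Your proposal is correct and, after the exploratory detour in the middle paragraphs, lands on exactly the same argument as the paper: the forward direction chains Theorem~\ref{Adamsequivalent} with Theorem~\ref{MainMorreyTheorem} at $p=1$, and the reverse direction is immediate from the inclusion~\eqref{inclusionBMObeta} (equivalently Lemma~\ref{nesting}) with $\beta_1=\beta$, $\beta_2=n$, using $BMO^n=BMO$. The paper's proof is precisely this, so your final argument matches it line for line.
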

This result builds on three key pieces: Adams' necessary and sufficient condition for $I_\alpha f \in BMO(\mathbb{R}^n)$ in Theorem \ref{Adamsequivalent},  the inclusion property (\ref{inclusionBMObeta}) from \cite{Chen-Spector}, and the embedding given in Theorem \ref{MainMorreyTheorem} when $p=1$.  The connections between these elements are illustrated in the diagram below:

\begin{tikzcd}[column sep=large, row sep=large, arrows=Rightarrow, scale=0.8, every node/.style={scale=0.8}]
& \fbox{$I_{\alpha} f \in BMO^{\beta} (\mathbb{R}^n) $ for $\beta \in (n-\alpha,n]$}  \arrow[dd, "(\ref{inclusionBMObeta})"', Rightarrow, shorten <=5pt, shorten >=5pt] \\
 \fbox{$f \in M^{\alpha}_1 (\mathbb{R}^n)$ and $I_{\alpha} f \in L_{loc}^{1}(\mathbb{R}^n)$} \arrow[ur, "Theorem \; \ref{MainMorreyTheorem}"', Rightarrow, shorten <=5pt, shorten >=5pt] \arrow[dr, "Theorem \; \ref{Adamsequivalent}", Leftrightarrow, shorten <=5pt, shorten >=5pt] & \\
& \fbox{$I_{\alpha} f \in BMO(\mathbb{R}^n)$}
\end{tikzcd}

Theorem \ref{Mainresult}, together with Theorem \ref{Adamsequivalent} and the characterization for $BMO^\beta(\mathbb{R}^n)$ discovered in \cite{Chen-Spector}, readily yields the following theorem:

\begin{theorem}\label{lotsofequivalent}
    Let $f$ be a non-negative measurable function. Then the following statements are equivalent.
\begin{enumerate}[label=(\roman*)]
\item  $I_\alpha f \in BMO(\mathbb{R}^n)$;
\item $I_\alpha f \in BMO^\beta(\mathbb{R}^n)$ for any $\beta \in (n-\alpha , n]$;
\item For any $\beta \in (n-\alpha , n]$, inequality (\ref{jninequality_content_prime}) holds with $u = I_\alpha f $;
\item For any $\beta \in (n-\alpha , n]$, inequality (\ref{betaexpintegral}) holds with $u = I_\alpha f $;
\item For any $\beta \in (n-\alpha , n]$, inequality
\begin{align*}
\|I_\alpha f \|_{BMO^{\beta,p}(\mathbb{R}^n)}:=  \sup_{Q \subset \mathbb{R}^n} \inf_{c \in \mathbb{R}} \left(\frac{1}{l(Q)^\beta} \int_{Q}  |I_\alpha f-c|^p d\mathcal{H}^{\beta}_\infty\right)^{1/p}< \infty
\end{align*}
holds for $p \in [1, \infty)$;
\item $f \in \mathcal{M}_1^\alpha (\mathbb{R}^n)$ and $I_\alpha f \in L^1_{loc}(\mathbb{R}^n)$ .
\end{enumerate}
\end{theorem}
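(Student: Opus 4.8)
The plan is to verify a circle of implications connecting all six conditions, relying on the tools already assembled in the paper. Two of the links are immediate: (i) $\Leftrightarrow$ (vi) is Adams' Theorem \ref{Adamsequivalent}, and (i) $\Leftrightarrow$ (ii) is Theorem \ref{Mainresult}. Since (ii) asserts membership in $BMO^\beta(\mathbb{R}^n)$ for \emph{every} $\beta \in (n-\alpha,n]$, it remains only to fold (iii), (iv) and (v) into the equivalence, which I would do by proving, for each fixed such $\beta$, the chain (ii) $\Rightarrow$ (iii) $\Rightarrow$ (iv) $\Rightarrow$ (v) $\Rightarrow$ (ii).

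First, (ii) $\Rightarrow$ (iii) is nothing but the John–Nirenberg inequality \eqref{jninequality_content_prime} of \cite[Theorem 1.3]{Chen-Spector}, applied to $u = I_\alpha f$ for the given $\beta$. Next, (iii) $\Rightarrow$ (iv) follows from the layer-cake argument recalled after \eqref{jninequality_content_prime}: fixing a cube $Q$ and the associated constant $c_Q$, one expands $\int_Q \exp(\gamma|I_\alpha f - c_Q|)\, d\mathcal{H}^\beta_\infty$ through the distribution-function representation for the Choquet integral, inserts the bound \eqref{jninequality_content_prime}, and chooses $\gamma$ small relative to $c_2/\|I_\alpha f\|_{BMO^\beta(\mathbb{R}^n)}$ so that the resulting integral in $\lambda$ converges to a multiple of $\ell(Q)^\beta$ (with $\mathcal{H}^\beta_\infty(Q) \le C\ell(Q)^\beta$ absorbing the remaining term). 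For (iv) $\Rightarrow$ (v), use the elementary pointwise inequality $t^p \le p!\, e^{t}$ for $t \ge 0$, with $t = \gamma|I_\alpha f - c_Q|$, together with the monotonicity of the Choquet integral, to obtain $\int_Q |I_\alpha f - c_Q|^p\, d\mathcal{H}^\beta_\infty \le (p!/\gamma^p)\, C_0\, \ell(Q)^\beta$; taking $p$-th roots, bounding $\inf_{c\in\mathbb{R}}$ by the value at $c = c_Q$, and taking the supremum over $Q$ gives $\|I_\alpha f\|_{BMO^{\beta,p}(\mathbb{R}^n)} < \infty$. Finally, (v) $\Rightarrow$ (ii) is obtained by specializing to $p = 1$, since $\|\cdot\|_{BMO^{\beta,1}(\mathbb{R}^n)}$ is by definition $\|\cdot\|_{BMO^\beta(\mathbb{R}^n)}$ (local integrability with respect to $\mathcal{H}^\beta_\infty$ also follows, via subadditivity of the Choquet integral). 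Assembling these implications with the two immediate links closes the equivalence.

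This argument is essentially bookkeeping once Theorems \ref{Adamsequivalent} and \ref{Mainresult} and \cite[Theorem 1.3]{Chen-Spector} are in hand, so I do not anticipate a genuine analytic obstacle. The one point that warrants care is that the Choquet integral against $\mathcal{H}^\beta_\infty$ is only subadditive, not additive; hence in the steps (iii) $\Rightarrow$ (iv) $\Rightarrow$ (v) one must proceed purely through pointwise monotonicity and the layer-cake representation — both valid for Hausdorff content — rather than through linearity, and one must check that the constants $c_1,c_2,\gamma,C_0$ produced along the way are uniform in the cube $Q$, which is precisely the form in which \eqref{jninequality_content_prime} and \eqref{betaexpintegral} are stated.
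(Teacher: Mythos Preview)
Your proposal is correct and follows essentially the same route as the paper: both reduce the equivalence to Theorem~\ref{Adamsequivalent} for (i)\,$\Leftrightarrow$\,(vi), Theorem~\ref{Mainresult} for (i)\,$\Leftrightarrow$\,(ii), and the John--Nirenberg machinery \eqref{jninequality_content_prime}--\eqref{betaexpintegral} for the remaining links. The only cosmetic difference is that the paper invokes Lemma~\ref{pseminorms} directly for (ii)\,$\Leftrightarrow$\,(v), whereas you reach (v) through the chain (ii)\,$\Rightarrow$\,(iii)\,$\Rightarrow$\,(iv)\,$\Rightarrow$\,(v) via the pointwise bound $t^p \le C_p\, e^{t}$ and then close with $p=1$; this is the same content unpacked, since Lemma~\ref{pseminorms} is itself proved from \eqref{jninequality_content_prime}.
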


This paper is organized as follows: Section 2 reviews the necessary preliminaries for defining the spaces $BMO^\beta(\mathbb{R}^n)$ and discusses their basic properties. This section also covers the mapping properties of the fractional maximal operator $\mathcal{M}_\eta$ which are important in the sequel. In Section 3, we establish a variant of technical results concerning the estimates of Riesz potentials, to be utilized in the subsequent sections. Finally, Section 4 presents the proofs of our main results.

\section{Preliminaries}

This section introduces the foundational concepts necessary for our discussion, focusing on the Choquet integral respect to the Hausdorff content, the $\beta$-dimensional $BMO$ space, and the fractional maximal function. Additionally, we outline their basic properties, which are crucial for the proof of our results.

In the following context, the notation $\Omega$ always denotes  either the entire Euclidean space $\mathbb{R}^n$ or an open cube $Q$ within $\mathbb{R}^n$. For a non-negative function $f$, the Choquet integral of $f$ over $\Omega$ with respect to the Hausdorff content, is defined by:

\begin{align}\label{choquet}
\int_{\Omega} f\;d\mathcal{H}^{\beta}_\infty:= \int_0^\infty \mathcal{H}^{\beta}_\infty\left(\left\{x \in \Omega: f(x)>t\right\}\right)\;dt.
\end{align}

A function $f: \Omega \rightarrow \mathbb{R}$ is said to be $\mathcal{H}^{\beta}_\infty$-quasicontinuous if, for any $\epsilon > 0$, there exists an open set $O \subseteq \Omega$ with $\mathcal{H}^{\beta}_\infty(O) < \epsilon$ where $f$ restricted to the complement of $O$ is continuous. This framework allows for the establishment of the $L^p$ norm for any signed $\mathcal{H}^{\beta}_\infty$-quasicontinuous function $f$, given $p \geq 1$, as:

\begin{align}\label{norm}
\|f\|_{L^p(\Omega;\mathcal{H}^{\beta}_\infty)}:=  \left( \int_{\Omega} |f|^p\;d\mathcal{H}^{\beta}_\infty \right)^{\frac{1}{p}}.
\end{align}
With these preparations, We introduce the vector space:
\begin{align*}
L^p(\Omega;\mathcal{H}^{\beta}_\infty):= \left\{ f\; \mathcal{H}^{\beta}_\infty\text{-quasicontinuous } : \|f\|_{L^p(\Omega;\mathcal{H}^{\beta}_\infty)}<+\infty\right\}.
\end{align*}

The norm defined in (\ref{norm}) behaves as a quasi-norm, and there exists an equivalent norm defined via Choquet integral respect to dyadic Hausdorff content such that $L^1(\Omega; \mathcal{H}^\beta_\infty)$ forms a Banach space. To be more precise, let $\mathcal{D}(Q)$ denote the set of all dyadic cube generalize by the cube $Q$ and we adopt the notation $\mathcal{D}(\mathbb{R}^n) = \mathcal{D}(\prod_{i=1}^n [0,1))$, the dyadic Hausdorff content $\mathcal{H}^{\beta,\Omega}_\infty$ subordinate to $\Omega$ of $E \subseteq \mathbb{R}^n$ is defined as
\begin{align}\label{defdyadic}
    \mathcal{H}^{\beta,\Omega}_\infty(E) : = \inf\left\{\sum_{i=1}^\infty \ell(Q_i)^\beta :E\subset \bigcup_{i=1}^\infty Q_i \text{ and } Q_i \in \mathcal{D}(\Omega)\right\}.
\end{align}
It has been shown that for non-negative functions $f$, there exists a constant $C_\beta$ such that 
\begin{align}\label{equivalentnorm}
    \frac{1}{C_\beta} \int_\Omega f \; d \mathcal{H}^{\beta,\Omega}_\infty \leq  \int_\Omega f \; d \mathcal{H}^{\beta}_\infty \leq C_\beta  \int_\Omega f \; d \mathcal{H}^{\beta,\Omega}_\infty. 
\end{align}
In here, $ \int_\Omega f \; d \mathcal{H}^{\beta,\Omega}_\infty $ denotes the Choquet integral of $f$ over $\Omega$ with respect to dyadic Hausdorff content $ \mathcal{H}^{\beta,\Omega}_\infty $, defined as 
\begin{align*}
    \int_{\Omega} f\;d\mathcal{H}^{\beta,\Omega}_\infty:= \int_0^\infty \mathcal{H}^{\beta,\Omega}_\infty \left(\left\{x \in \Omega: f(x)>t\right\}\right)\;dt.
\end{align*}
Moreover, the vector space $L^1(\Omega; \mathcal{H}^{\beta}_\infty)$ equipped with norm 
\begin{align}\label{dyadicnorm}
    \Vert f \Vert_{L^1 (\Omega, \mathcal{H}^{\beta,\Omega}_\infty)} : = \int_\Omega |f| \; d\mathcal{H}^{\beta,\Omega}_\infty
\end{align}
forms a Banach space (see \cite[Proposition 2.3]{YY_2008}, \cite[Proposition 3.2 and 3.5]{STW_2016} and the survey presented in \cite[Section 2]{Chen-Spector}).

The class of continuous and bounded functions in $\Omega$, denoted by $C_b(\Omega)$, for which the norm in equation (\ref{norm}) remains finite, is proven to be dense in the vector space $L^1(\Omega; \mathcal{H}^{\beta}_\infty)$ with quasi-norm $\Vert \cdot \Vert_{L^1 (\Omega; \mathcal{H}^{\beta}_\infty)}$ and thus also dense in the vector space $L^1(\Omega; \mathcal{H}^{\beta}_\infty)$ with norm $\Vert \cdot \Vert_{L^1 (\Omega; \mathcal{H}^{\beta,\Omega}_\infty)}$ (see \cite{AdamsChoquetnote} and \cite{PS_2023}). In addition, a function $f: \Omega \rightarrow \mathbb{R}^n$ is said to be locally integrable with respect to the Hausdorff content $\mathcal{H}^\beta_\infty$ in $\Omega$, denoted as $f \in L^1_{\text{loc}}(\Omega; \mathcal{H}^\beta_\infty)$, if $f$ multiplied by the characteristic function of any cube $Q \subseteq \Omega$ belongs to $L^1(Q; \mathcal{H}^\beta_\infty)$.

Next, we outline the basic properties of the Choquet integral with respect to the Hausdorff content. The following lemma can be found in \cite[p.~5]{Petteri_2023}, and its proof is referred to  \cite{AdamsChoquet1} and \cite[Chapter 4]{AdamsMorreySpacebook}.
\begin{lemma}\label{basicChoquet}
\begin{enumerate}[label=(\roman*)]
\item 
For $0 \leq a $ and non-negative functions $f$, we have
\begin{align*}
    \int_\Omega a \; f(x)\; d \mathcal{H}^\beta_\infty = a  \int_\Omega  \; f(x)\; d \mathcal{H}^\beta_\infty;
\end{align*}
\item For non-negative functions $f_1 $ and $f_2$, we have
\begin{align*}
    \int_\Omega  \; f_1(x) + f_2 (x)\; d \mathcal{H}^\beta_\infty \leq 2 \left( \int_\Omega  \; f_1(x)\; d \mathcal{H}^\beta_\infty +\int_\Omega  \; f_2(x)\; d \mathcal{H}^\beta_\infty \right);
\end{align*}
\item Let $1 \leq p, \; p' $ and $\frac{1}{p} + \frac{1}{p'} = 1$. Then for non-negative functions $f_1$ and $f_2$, we have
\begin{align*}
    \int_\Omega f_1 (x) f_2(x) \; d \mathcal{H}^\beta_\infty \leq 2 
    \left( \int_\Omega f_1(x)^p d \mathcal{H}^\beta_\infty  \right)^{\frac{1}{p}}  \left( \int_\Omega f_2(x)^{p'} d \mathcal{H}^\beta_\infty  \right)^{\frac{1}{p'}}.
\end{align*}
\end{enumerate}
\end{lemma}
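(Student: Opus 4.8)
The plan is to read off all three properties directly from the layer--cake definition (\ref{choquet}) of the Choquet integral, using only that $\mathcal{H}^{\beta}_\infty$ is a monotone, countably subadditive set function (an outer measure), both features being immediate from its definition as an infimum over ball coverings. For (i), the case $a=0$ is trivial with the convention $0\cdot\infty=0$; for $a>0$ one has $\{x\in\Omega: af(x)>t\}=\{x\in\Omega: f(x)>t/a\}$ for every $t>0$, so the substitution $t=as$ in (\ref{choquet}) gives
\[
\int_\Omega af\,d\mathcal{H}^{\beta}_\infty=\int_0^\infty \mathcal{H}^{\beta}_\infty\bigl(\{f>t/a\}\bigr)\,dt=a\int_0^\infty \mathcal{H}^{\beta}_\infty\bigl(\{f>s\}\bigr)\,ds=a\int_\Omega f\,d\mathcal{H}^{\beta}_\infty.
\]
For (ii), from the pointwise inclusion $\{f_1+f_2>t\}\subseteq\{f_1>t/2\}\cup\{f_2>t/2\}$ together with monotonicity and subadditivity of $\mathcal{H}^{\beta}_\infty$ we obtain $\mathcal{H}^{\beta}_\infty(\{f_1+f_2>t\})\le \mathcal{H}^{\beta}_\infty(\{f_1>t/2\})+\mathcal{H}^{\beta}_\infty(\{f_2>t/2\})$ for all $t>0$; integrating over $t\in(0,\infty)$ and using the substitution $s=t/2$ in each term gives $\int_\Omega(f_1+f_2)\,d\mathcal{H}^{\beta}_\infty\le 2\int_\Omega f_1\,d\mathcal{H}^{\beta}_\infty+2\int_\Omega f_2\,d\mathcal{H}^{\beta}_\infty$.

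For (iii) I would first dispose of the degenerate cases. Set $A:=\int_\Omega f_1^p\,d\mathcal{H}^{\beta}_\infty$ and $B:=\int_\Omega f_2^{p'}\,d\mathcal{H}^{\beta}_\infty$. If $A=0$, then $\mathcal{H}^{\beta}_\infty(\{f_1^p>1/k\})=0$ for every $k\in\mathbb{N}$, so by countable subadditivity $\mathcal{H}^{\beta}_\infty(\{f_1>0\})=0$; hence $\{f_1f_2>0\}$ is $\mathcal{H}^{\beta}_\infty$-null and both sides of (iii) vanish, and symmetrically if $B=0$, while if $A$ or $B$ is infinite (the other being positive) the right-hand side is infinite and there is nothing to prove. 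Assuming $0<A,B<\infty$, replacing $f_1$ by $f_1/A^{1/p}$ and $f_2$ by $f_2/B^{1/p'}$ reduces, by (i), to the normalized case $A=B=1$. There, Young's inequality $f_1f_2\le \frac1p f_1^p+\frac1{p'}f_2^{p'}$, monotonicity of the Choquet integral (which follows from monotonicity of $\mathcal{H}^{\beta}_\infty$ and (\ref{choquet})), and then (ii) and (i) yield
\[
\int_\Omega f_1f_2\,d\mathcal{H}^{\beta}_\infty\le 2\left(\frac1p\int_\Omega f_1^p\,d\mathcal{H}^{\beta}_\infty+\frac1{p'}\int_\Omega f_2^{p'}\,d\mathcal{H}^{\beta}_\infty\right)=2\left(\frac1p+\frac1{p'}\right)=2,
\]
which is exactly the claimed bound when $A=B=1$; undoing the normalization with (i) gives the general inequality.

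No genuine obstacle arises: all of the analytic content is carried by the elementary inclusions among superlevel sets, the change of variables in the layer--cake formula, and Young's inequality, while the only properties of $\mathcal{H}^{\beta}_\infty$ invoked are monotonicity and (countable) subadditivity. The only points that need slight care are tracking the constant $2$ through (ii) and handling the degenerate cases $A,B\in\{0,+\infty\}$ in (iii). This is the standard treatment of Choquet integration against a monotone set function, and the details may be found in \cite{AdamsChoquet1} and \cite[Chapter 4]{AdamsMorreySpacebook}.
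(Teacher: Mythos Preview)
Your proof is correct and follows the standard layer--cake argument. The paper itself does not prove this lemma at all but simply records it as known, referring the reader to \cite{AdamsChoquet1} and \cite[Chapter~4]{AdamsMorreySpacebook}; your write-up reproduces precisely that standard treatment (and cites the same sources), so there is nothing to compare.
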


It has been shown in \cite{AdamsChoquetnote} that the dual space of $L^1(\mathbb{R}^n; \mathcal{H}^\beta_\infty)$ is the Morrey space $\mathcal{M}^\beta(\mathbb{R}^n)$ and thus
\begin{align}\label{HB}
\int_{\mathbb{R}^n} f\;d\mathcal{H}^{\beta}_\infty \asymp \sup_{\|\mu\|_{\mathcal{M}^\beta} \leq 1} \int_{\mathbb{R}^n} f \;d\mu 
\end{align}
holds for every non-negative function $f \in L^1(\mathbb{R}^n;\mathcal{H}^{\beta}_\infty)$. In particular, (\ref{HB}) also holds if $f$ is a non-negative lower semi-continuous function (see Adams \cite{AdamsChoquetnote}*{p.~118}).

We now introduce the definition of the space of functions with bounded \(\beta\)-dimensional mean oscillation, denoted as $BMO^\beta(\Omega)$. The definition for the case where $\Omega$ is a cube has been provided in \cite{Chen-Spector}, and the definition for $\Omega = \mathbb{R}^n$ can be readily modified with those in \cite{Chen-Spector}.  For a function $u \in L_{loc}^1(\Omega;\mathcal{H}^{\beta}_\infty)$, $u$ is said to be has bounded $\beta$-dimensional mean oscillation in $\Omega$ provided
\begin{align}\label{nondyadic_norm}
 \sup_{Q \subseteq \Omega} \inf_{c \in \mathbb{R}} \frac{1}{l(Q)^\beta}  \int_{Q}  |u-c| \;d\mathcal{H}^{\beta}_\infty<+\infty,
\end{align}
where the supremum is taken over all finite subcubes $Q \subseteq \Omega$.  We then define the space of functions of bounded $\beta$-dimensional mean oscillation in $\Omega$:
\begin{align*}
BMO^{\beta}(\Omega):=  \left\{ u \in L^1(\Omega;\mathcal{H}^{\beta}_\infty) :  \|u\|_{BMO^{\beta}(\Omega)}<+\infty\right\},
\end{align*}
where
\begin{align*}
\|u\|_{BMO^{\beta}(\Omega)} :=\sup_{Q \subseteq\Omega} \inf_{c \in \mathbb{R}} \frac{1}{l(Q)^\beta}  \int_{\Omega}  |u-c| \;d\mathcal{H}^{\beta}_\infty.
\end{align*}

The following two lemmas presented below, and their associated proofs for the case when $\Omega$ is a cube in $\mathbb{R}^n$, are given in \cite[Corollary 1.5 and Corollary 1.6]{Chen-Spector} and are derived from the extended John-Nirenberg inequality (\ref{jninequality_content_prime}). The proofs for the lemmas when $\Omega = \mathbb{R}^n$ are trivially
adapted to the proofs of case when $\Omega$ is a cube.

\begin{lemma}\label{nesting}
Let $0<\beta_1 \leq \beta_2 \leq n$ and suppose $u \in BMO^{\beta_1} (\Omega)$.  Then $u \in BMO^{\beta_2}(\Omega)$ and
\begin{align*}
\|u\|_{BMO^{\beta_2}(\Omega)} \leq C\|u\|_{BMO^{\beta_1} (\Omega)}
\end{align*}
for some constant $C=C(\alpha,\beta)>0$ independent of $u$.
\end{lemma}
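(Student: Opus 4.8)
\textbf{Proof proposal for Lemma \ref{nesting}.}

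The plan is to reduce the statement for $\Omega = \mathbb{R}^n$ to the already-established case where $\Omega$ is a cube, which in turn rests on the extended John--Nirenberg inequality \eqref{jninequality_content_prime}. First I would recall the elementary monotonicity of Hausdorff contents in the dimension parameter on sets of small diameter: if $E$ is contained in a single cube $Q$, then for $0 < \beta_1 \le \beta_2 \le n$ one has $\mathcal{H}^{\beta_2}_\infty(E) \le \ell(Q)^{\beta_2 - \beta_1}\,\mathcal{H}^{\beta_1}_\infty(E)$, after normalizing so that the covering balls have radius at most comparable to $\ell(Q)$; this is why the nesting inequality carries the scale-dependent factor that makes the mean-oscillation normalization $\ell(Q)^{-\beta}$ behave correctly. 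The key point, already encoded in \cite[Corollary 1.6]{Chen-Spector} for a fixed cube, is that the John--Nirenberg bound \eqref{jninequality_content_prime} controls the distribution function of $u - c_Q$ with respect to $\mathcal{H}^{\beta_1}_\infty$, and integrating that exponential tail bound against $d\lambda$ after passing from $\mathcal{H}^{\beta_1}_\infty$ to $\mathcal{H}^{\beta_2}_\infty$ on the cube $Q$ yields $\frac{1}{\ell(Q)^{\beta_2}}\int_Q |u - c_Q|\, d\mathcal{H}^{\beta_2}_\infty \le C \|u\|_{BMO^{\beta_1}(Q)}$.

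Next I would observe that for $\Omega = \mathbb{R}^n$ both norms are defined as suprema over \emph{finite} subcubes $Q \subseteq \mathbb{R}^n$, and for each such fixed cube $Q$ the local quantity $\inf_{c}\frac{1}{\ell(Q)^{\beta_i}}\int_Q |u-c|\, d\mathcal{H}^{\beta_i}_\infty$ depends only on the restriction $u|_Q$. Since $\|u\|_{BMO^{\beta_1}(\mathbb{R}^n)} < \infty$ implies $\|u|_Q\|_{BMO^{\beta_1}(Q)} \le \|u\|_{BMO^{\beta_1}(\mathbb{R}^n)}$ for every finite cube $Q$ (the supremum over subcubes of $Q$ is dominated by the supremum over subcubes of $\mathbb{R}^n$), the cube-case estimate applied to $u|_Q$ gives
\begin{align*}
\inf_{c \in \mathbb{R}} \frac{1}{\ell(Q)^{\beta_2}} \int_Q |u - c|\, d\mathcal{H}^{\beta_2}_\infty \le C \|u|_Q\|_{BMO^{\beta_1}(Q)} \le C \|u\|_{BMO^{\beta_1}(\mathbb{R}^n)},
\end{align*}
with $C = C(\beta_1,\beta_2)$ \emph{independent of $Q$}. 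Taking the supremum over all finite cubes $Q \subseteq \mathbb{R}^n$ yields $\|u\|_{BMO^{\beta_2}(\mathbb{R}^n)} \le C \|u\|_{BMO^{\beta_1}(\mathbb{R}^n)}$, and in particular $u \in L^1(\mathbb{R}^n; \mathcal{H}^{\beta_2}_\infty)$ follows from $u \in L^1(\mathbb{R}^n;\mathcal{H}^{\beta_1}_\infty)$ by the same local comparison, so $u \in BMO^{\beta_2}(\mathbb{R}^n)$.

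The only genuine obstacle is making sure the constant in the John--Nirenberg step is uniform in the cube $Q$ and that the passage from $\mathcal{H}^{\beta_1}_\infty$ to $\mathcal{H}^{\beta_2}_\infty$ on a cube does not secretly introduce a $Q$-dependent factor beyond the scaling already absorbed into the $\ell(Q)^{-\beta}$ normalizations --- a scaling check that is routine given the explicit form of $\mathcal{H}^\beta_\infty$ and of \eqref{jninequality_content_prime}. Since the excerpt explicitly grants that ``the proofs for the lemmas when $\Omega = \mathbb{R}^n$ are trivially adapted to the proofs of case when $\Omega$ is a cube,'' the bulk of the work is precisely this bookkeeping, and no new idea beyond \eqref{jninequality_content_prime} and content monotonicity is required.
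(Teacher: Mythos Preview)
Your proposal is correct and follows essentially the same approach the paper indicates: the paper does not give an independent proof of Lemma~\ref{nesting} but defers to \cite[Corollary~1.6]{Chen-Spector} for the cube case (derived from the John--Nirenberg inequality \eqref{jninequality_content_prime}) and remarks that the case $\Omega=\mathbb{R}^n$ is a trivial adaptation, which is precisely the supremum-over-cubes bookkeeping you carry out. One small point worth tightening is the membership claim at the end: what is needed is $u\in L^1_{\mathrm{loc}}(\Omega;\mathcal{H}^{\beta_2}_\infty)$ together with $\mathcal{H}^{\beta_2}_\infty$-quasicontinuity (cf.\ Lemma~\ref{quasicontinuouslowerimplyhigher}), not global $L^1$ integrability, but this is exactly the ``local comparison'' you allude to.
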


\begin{lemma} \label{pseminorms}
Let $\beta \in (0,n]$.  There exists a constant $C=C(\beta)>0$ such that
\begin{align*}
\frac{1}{C}\|u\|_{BMO^{\beta}(\Omega)} \leq \|u\|_{BMO^{\beta,p}(\Omega)} \leq C p\|u\|_{BMO^{\beta}(\Omega)}
\end{align*}
for all $u \in BMO^\beta(\Omega)$.
\end{lemma}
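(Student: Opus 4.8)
The plan is to prove the two-sided bound by deducing both inequalities from the John--Nirenberg inequality \eqref{jninequality_content_prime}, exactly as Lemma \ref{nesting} is deduced. The left inequality $\frac{1}{C}\|u\|_{BMO^\beta(\Omega)} \le \|u\|_{BMO^{\beta,p}(\Omega)}$ is essentially free: for any cube $Q$ and any constant $c$, Lemma \ref{basicChoquet}(iii) with exponents $p$ and $p'$ applied to $f_1 = |u-c|$ and $f_2 = \chi_Q$ gives
\begin{align*}
\frac{1}{\ell(Q)^\beta}\int_Q |u-c|\,d\mathcal{H}^\beta_\infty \le \frac{2}{\ell(Q)^\beta}\left(\int_Q |u-c|^p\,d\mathcal{H}^\beta_\infty\right)^{1/p}\left(\int_Q \chi_Q\,d\mathcal{H}^\beta_\infty\right)^{1/p'},
\end{align*}
and since $\int_Q \chi_Q\,d\mathcal{H}^\beta_\infty = \mathcal{H}^\beta_\infty(Q) \asymp \ell(Q)^\beta$, the right-hand side is $\le C\,\ell(Q)^{-\beta/p}\big(\int_Q |u-c|^p\,d\mathcal{H}^\beta_\infty\big)^{1/p}$; taking the infimum over $c$ and the supremum over $Q$ yields the claim. (One must check that the inf over $c$ on the $L^1$ side is controlled by the inf over $c$ on the $L^p$ side, which is immediate since the same $c$ works on both sides.)

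For the right inequality $\|u\|_{BMO^{\beta,p}(\Omega)} \le C p \|u\|_{BMO^\beta(\Omega)}$, the idea is to fix a cube $Q$, let $c_Q$ be the constant from \eqref{jninequality_content_prime}, and compute the Choquet integral of $|u-c_Q|^p$ via the layer-cake formula \eqref{choquet}:
\begin{align*}
\int_Q |u-c_Q|^p\,d\mathcal{H}^\beta_\infty = \int_0^\infty \mathcal{H}^\beta_\infty\big(\{x\in Q: |u(x)-c_Q|^p > t\}\big)\,dt = p\int_0^\infty \lambda^{p-1}\,\mathcal{H}^\beta_\infty\big(\{x\in Q: |u(x)-c_Q| > \lambda\}\big)\,d\lambda,
\end{align*}
after the substitution $t = \lambda^p$. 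Now insert the John--Nirenberg bound \eqref{jninequality_content_prime}: the integral is dominated by
\begin{align*}
p\,c_1\,\ell(Q)^\beta \int_0^\infty \lambda^{p-1} \exp\!\big(-c_2\lambda/\|u\|_{BMO^\beta(\Omega)}\big)\,d\lambda = p\,c_1\,\ell(Q)^\beta \left(\frac{\|u\|_{BMO^\beta(\Omega)}}{c_2}\right)^p \Gamma(p).
\end{align*}
Hence $\big(\ell(Q)^{-\beta}\int_Q |u-c_Q|^p\,d\mathcal{H}^\beta_\infty\big)^{1/p} \le (p\,c_1)^{1/p}\,c_2^{-1}\,\Gamma(p)^{1/p}\,\|u\|_{BMO^\beta(\Omega)}$. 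Since $\inf_c$ is bounded above by the choice $c=c_Q$, taking the supremum over $Q$ gives the bound with constant $(p c_1)^{1/p} c_2^{-1}\Gamma(p)^{1/p}$; by Stirling's formula $\Gamma(p)^{1/p} \lesssim p$ and $(pc_1)^{1/p}$ is bounded for $p \ge 1$, so this constant is $\le C p$ for a constant $C = C(\beta)$, as claimed.

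The main technical point to handle carefully is the validity of \eqref{jninequality_content_prime} for $u \in BMO^\beta(\Omega)$ with $\Omega = \mathbb{R}^n$ or $\Omega$ a general cube, and the fact that the constants $c_1, c_2$ there depend only on $\beta$ (not on $u$ or $Q$) --- this is exactly what the excerpt records from \cite[Theorem 1.3]{Chen-Spector}, with the adaptation to $\Omega = \mathbb{R}^n$ noted before Lemma \ref{nesting}. A minor subtlety is that the John--Nirenberg constant $c_Q$ need not be a minimizer of $\inf_c$; but this is harmless since we only ever use $\inf_c (\cdots) \le (\cdots)|_{c=c_Q}$. No real obstacle is anticipated; the proof is a direct layer-cake computation combined with the already-quoted exponential integrability, and the only thing requiring attention is tracking the $p$-dependence of the constant through Stirling's estimate $\Gamma(p+1)^{1/p} \asymp p$.
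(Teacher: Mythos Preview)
Your proposal is correct and follows precisely the route the paper indicates: the paper does not give a self-contained proof of Lemma~\ref{pseminorms} but states that it (together with Lemma~\ref{nesting}) is ``derived from the extended John--Nirenberg inequality \eqref{jninequality_content_prime}'' and refers to \cite[Corollary 1.5 and Corollary 1.6]{Chen-Spector}. Your argument---H\"older for the left inequality and the layer-cake computation with the exponential decay \eqref{jninequality_content_prime} plus Stirling for the right inequality---is exactly the standard derivation behind that citation, so the approaches coincide.
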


We now introduce the definition of the fractional maximal operator. Let \(0 < \eta < n\), where \(n \in \mathbb{N}\). The fractional maximal operator, associated with a locally integrable function $f$ of order \(\eta\), is defined as

\begin{align*}
    \mathcal{M}_\eta f(x) := \sup_{r>0} \frac{1}{r^{n-\eta}}  \int_{B(x,r)} |f(y)| \; dy.
\end{align*}
In particular, a locally integrable function $f$  belongs to the Morrey space $\mathcal{M}^{\alpha}_1 (\mathbb{R}^n)$ if and only if
\begin{align*}
\sup\limits_{x \in \mathbb{R}^n} \mathcal{M}_{\alpha} f  (x)< \infty.
\end{align*}
Furthermore, the following estimates of weak and strong types hold:
\begin{lemma}\label{Adamslazylemma}
Let $0 < \eta <n.$

\begin{enumerate}[label=(\roman*)]
\item  If $1<p < \frac{n}{\eta}$ and $\beta=n- \eta p$, then there exists a constant $A_1$ depending on $n$, $\eta$ and $p$ such that
\begin{align*}
    \left( \int_{\mathbb{R}^n} (\mathcal{M}_\eta f )^p \; d \mathcal{H}^\beta_\infty \right)^{\frac{1}{p}} \leq A_1  \left( \int_{\mathbb{R}^n} |f|^p \; dx \right)^{\frac{1}{p}};
\end{align*}

\item If $p = \frac{\beta}{n- \eta}$ with $n-\eta \leq \beta \leq n$, then there exists a constant $A_2$ depending on $n$, $\eta$ and $p$ such that
\begin{align*}
    \Vert \mathcal{M}_\eta f \Vert_{L^{p,\infty} ( \mathbb{R}^n;\mathcal{H}^\beta_\infty )} \leq A_2  \int_{\mathbb{R}^n} |f| \; dx.
\end{align*}
In here, $L^{p,\infty}(\mathbb{R}^n; \mathcal{H}^\beta_\infty)$ denotes the vector spaces of all the functions $f: \mathbb{R}^n \to \mathbb{R}$ satisfying
\begin{align*}
   \Vert f \Vert_{L^{p,\infty} (\mathbb{R}^n; \mathcal{H}^\beta_\infty)}: = \sup_{\lambda> 0 } \lambda^{\frac{1}{p}} \mathcal{H}^\beta_\infty\left( \left\{ x\in \mathbb{R}^n: |f(x)| > \lambda \right\} \right)< \infty.
\end{align*}
\end{enumerate}
\end{lemma}

The proof of (i) follows from the one weight inequality in \cite[Theorem B]{Sawyer_1982}.  The proof of (ii), when \(\beta = n - \eta\), can be found in \cite[Lemma 3.2]{BZ_1974}. Furthermore, both (i) and (ii) are given in \cite[Theorem 7]{AdamsChoquet1}, where they are presented in terms of the Choquet integral concerning Hausdorff content, as in Lemma \ref{Adamslazylemma} (see also \cite{hatano2023choquet} for the results concerning the weak Choquet spaces).

\section{Auxiliary Results
}
The fist lemma in this section is an improvement of the result in  \cite[p.~439]{MR3381284} as the Morrey space $\mathcal{M}^{\alpha }_1  (\mathbb{R}^n)$ is a suitable subspace of $L^{n/\alpha}(\mathbb{R}^n)$.
\begin{lemma}\label{locallyintegrablaformodifiedRieszpotential}
   Let $0< N$, $0<\alpha <n \in \mathbb{N}$, $f \in  \mathcal{M}^{\alpha}_1 (\mathbb{R}^n)$ and $ \Tilde{I_\alpha^N} f$ be defined as
    \begin{align*}
        \Tilde{I_\alpha^N} f (x) : = \frac{1}{\gamma(\alpha)} \int_{\mathbb{R}^n}  \left( \frac{1}{|x-y|^{n-\alpha}} - \frac{\chi_{B_N} (y)}{|y|^{n-\alpha }} \right) f(y) \; d  y,
    \end{align*}
    where $B_N$ denotes the complement of closed ball with radius $N$, that is, $B_N := \{ x\in \mathbb{R}^n : |x|>N \}.$ Then $\Tilde{I_\alpha^N} f \in L^1_{loc}(\mathbb{R}^n)$.
\end{lemma}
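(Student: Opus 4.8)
The plan is to prove directly that $\int_{B(0,R)} |\Tilde{I_\alpha^N} f(x)|\,dx < \infty$ for every radius $R \geq N$; since an arbitrary cube $Q \subseteq \mathbb{R}^n$ is contained in some such ball, this yields $\Tilde{I_\alpha^N} f \in L^1_{loc}(\mathbb{R}^n)$. Fix such an $R$. Up to the harmless constant $\gamma(\alpha)^{-1}$, and by Tonelli's theorem, it is enough to show that
\[
J := \int_{B(0,R)} \int_{\mathbb{R}^n} \left| \frac{1}{|x-y|^{n-\alpha}} - \frac{\chi_{B_N}(y)}{|y|^{n-\alpha}} \right| |f(y)| \, dy \, dx < \infty ,
\]
because finiteness of $J$ also shows that the integral defining $\Tilde{I_\alpha^N} f(x)$ converges absolutely for almost every $x \in B(0,R)$, so that $\Tilde{I_\alpha^N} f$ is a genuine element of $L^1(B(0,R))$. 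Throughout I will use the elementary consequence of the Morrey condition that $\int_{B(z,r)} |f| \leq C_n \|f\|_{\mathcal{M}^\alpha_1(\mathbb{R}^n)} \, r^{\,n-\alpha}$ for all $z \in \mathbb{R}^n$, $r > 0$ (a ball of radius $r$ lies in a cube of side $2r$). I split the inner integral into the near region $\{|y| \leq 2R\}$ and the far region $\{|y| > 2R\}$.

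On the near region I bound the kernel crudely by $|x-y|^{-(n-\alpha)} + \chi_{B_N}(y)|y|^{-(n-\alpha)}$. For the first term, Tonelli together with the bound $\int_{B(0,R)} |x-y|^{-(n-\alpha)}\,dx \leq \int_{B(0,3R)} |z|^{-(n-\alpha)}\,dz = C_{n,\alpha} R^{\alpha}$ (valid uniformly for $|y| \leq 2R$, and finite since $\alpha > 0$) reduces the contribution to $C_{n,\alpha} R^{\alpha} \int_{|y| \leq 2R} |f| \leq C_{n,\alpha} R^{n} \|f\|_{\mathcal{M}^\alpha_1(\mathbb{R}^n)}$. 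For the second term, on $\{N < |y| \leq 2R\}$ one has $|y|^{-(n-\alpha)} \leq N^{-(n-\alpha)}$ because $n-\alpha>0$, so after integrating the resulting constant over $B(0,R)$ this contribution is at most $C_n N^{-(n-\alpha)} R^{n} \int_{|y| \leq 2R} |f| \leq C_{n,\alpha} N^{-(n-\alpha)} R^{\,2n-\alpha} \|f\|_{\mathcal{M}^\alpha_1(\mathbb{R}^n)} < \infty$.

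The far region is the heart of the lemma, and the cancellation built into the definition of $\Tilde{I_\alpha^N} f$ is essential there (the bare integral $\int_{|y|>2R} |y|^{-(n-\alpha)} |f(y)|\,dy$ need not be finite for $f \in \mathcal{M}^\alpha_1(\mathbb{R}^n)$). Since $|y| > 2R \geq 2N$ forces $\chi_{B_N}(y) = 1$, I apply the mean value theorem to $t \mapsto |y - tx|^{-(n-\alpha)}$ on $[0,1]$; as $|x| \leq R \leq |y|/2$, every point on the segment satisfies $|y - tx| \geq |y|/2$, which gives
\[
\left| \frac{1}{|x-y|^{n-\alpha}} - \frac{1}{|y|^{n-\alpha}} \right| \leq C_{n,\alpha} \, \frac{|x|}{|y|^{\,n-\alpha+1}} \leq C_{n,\alpha} \, \frac{R}{|y|^{\,n-\alpha+1}} .
\]
Hence the far contribution to $J$ is at most $C_{n,\alpha} |B(0,R)| \, R \int_{|y| > 2R} |y|^{-(n-\alpha+1)} |f(y)|\,dy$, and I decompose $\{|y| > 2R\}$ into the dyadic annuli $A_k := \{2^{k}R < |y| \leq 2^{k+1}R\}$, $k \geq 1$. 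On $A_k$ one has $|y|^{-(n-\alpha+1)} \leq (2^{k}R)^{-(n-\alpha+1)}$, while $\int_{A_k} |f| \leq \int_{|y| \leq 2^{k+1}R}|f| \leq C_n \|f\|_{\mathcal{M}^\alpha_1(\mathbb{R}^n)} (2^{k+1}R)^{\,n-\alpha}$, so the $k$-th term is $\leq C_{n,\alpha} \|f\|_{\mathcal{M}^\alpha_1(\mathbb{R}^n)} \, 2^{-k} R^{-1}$; summing the geometric series gives $\int_{|y| > 2R} |y|^{-(n-\alpha+1)} |f(y)|\,dy \leq C_{n,\alpha} \|f\|_{\mathcal{M}^\alpha_1(\mathbb{R}^n)} R^{-1}$, and therefore the far contribution to $J$ is $\leq C_{n,\alpha} R^{n} \|f\|_{\mathcal{M}^\alpha_1(\mathbb{R}^n)} < \infty$. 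Adding the two regions gives $J < \infty$, which finishes the argument.

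I expect the far-field estimate to be the only real obstacle: one must recognize that subtracting $\chi_{B_N}(y)|y|^{-(n-\alpha)}$ produces a first-order Taylor cancellation that upgrades the decay of the kernel from $|y|^{-(n-\alpha)}$ to $|y|^{-(n-\alpha+1)}$, and that this extra power of $|y|$ is precisely what is needed to beat the $O(|y|^{\,n-\alpha})$ growth of $\int_{B(0,|y|)} |f|$ supplied by the Morrey norm. The near-region bounds and the interchange of integrals are then routine bookkeeping.
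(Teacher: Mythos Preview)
Your proof is correct and follows essentially the same approach as the paper: both split the $y$-integral into a near region $\{|y|\le 2R\}$ (handled by a crude triangle-inequality bound on the kernel together with Tonelli and the local integrability of $f$) and a far region $\{|y|>2R\}$ (handled by the mean value theorem cancellation and a dyadic decomposition controlled by the Morrey norm). The only cosmetic difference is that the paper phrases the far-field bound via $\mathcal{M}_\alpha f(0)$ rather than $\|f\|_{\mathcal{M}^\alpha_1}$, which amounts to the same thing.
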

\begin{proof}
   It is sufficient to show that 
\begin{align}\label{claimforlocalintegrable}
    \int_{|x|<M} \Tilde{I_\alpha^N} f  (x) \; dx <\infty \text{ for every } N<M< \infty .
\end{align}
To this end, we split the integral into two pieces:
\begin{align*}
    \int_{|x|<M} \Tilde{I_\alpha^N} f (x) \; dx &\leq \frac{1}{\gamma(\alpha)}  \int_{|x|<M} \int_{|y| < 2M}\left| \frac{1}{|x-y|^{n-\alpha}} - \frac{\chi_{B_N} (y)}{|y|^{n-\alpha }} \right| |f(y)| \; dy \; dx\\
    & \leq \frac{1}{\gamma(\alpha)} \int_{|x|<M} \int_{|y| \geq 2M}\left| \frac{1}{|x-y|^{n-\alpha}} - \frac{\chi_{B_N} (y)}{|y|^{n-\alpha }} \right| |f(y)| \; dy \; dx \\
    &:= I + II.
\end{align*}
For $I$, we have $ \frac{\chi_{B_N} (y)}{|y|^{n-\alpha }} \leq N^{\alpha -n}$ for every $y \in \mathbb{R}^n$ and thus Tonelli's theorem gives
\begin{align*}
    I  & \leq  \frac{1}{\gamma(\alpha)}  \int_{|x|<M} \int_{|y| < 2M}\left| \frac{1}{|x-y|^{n-\alpha}} + N^{\alpha - n} \right|  |f(y)| \; dy \; dx\\
    & = \frac{1}{\gamma(\alpha)}  \int_{|y| < 2M} \int_{|x|<M}\left| \frac{1}{|x-y|^{n-\alpha}}  + N^{\alpha - n} \right|   \; dx\; |f(y)| \; d y.
\end{align*}
Since $ f$ is locally finite and 
\begin{align*}
    \int_{|x| < M} \left| \frac{1}{|x-y|^{n-\alpha }} + N^{\alpha -n}  \right| \; dx \leq C \left( M^\alpha +  N^{\alpha -n} M^n \right) < \infty
\end{align*}
holds for $|y|< 2M$, we obtain $I < \infty.$

For $II$, using mean value theorem, we deduce that for $|x| <M$ and $|y| >2M$, there exists a constant $C$ depending on $n$ and $\alpha $ such that
\begin{align*}
\left| \frac{1}{|x-y|^{n-\alpha }} - \frac{\chi_{B_N} (y)}{|y|^{n-\alpha }} \right| &= \left| \frac{1}{|x-y|^{n-\alpha }} - \frac{1}{|y|^{n-\alpha }} \right|\\
&\leq C \frac{|x|}{|y|^{n-\alpha +1}}\\
&\leq C \frac{M}{|y|^{n-\alpha +1}}
\end{align*}
  and again by Tonelli's theorem one deduces that
\begin{align}\label{part1forestimateII}
   \nonumber II &\leq C \left( \int_{|x| < M} M \; dx \right) \left( \int_{|y| \geq 2M} \frac{|f(y)|}{|y|^{n-\alpha +1}} \; dy \right)\\
& \leq C M^{n+1} \int_{|y| \geq 2M} \frac{|f(y)|}{|y|^{n-\alpha +1}} \; d y.
\end{align}
  Moreover, a dyadic splitting yields that
\begin{align}\label{part2forestimateII}
\nonumber \int_{|y| \geq 2M} \frac{|f(y)|}{|y|^{n-\alpha +1}} \; d  y &= \sum_{k=0}^\infty \int_{2^k M \leq  |y| < 2^{k+1} M  } \frac{|f(y)|}{|y|^{n-\alpha +1}} \; d y \\\nonumber
    &\leq   \sum_{k=0}^\infty \left( 2^k M \right)^{-(n-\alpha +1)} \int_{|y| < 2^{k+1}M} |f(y)| \; dy \\\nonumber
    & \leq  \sum_{k=0}^\infty \left( 2^k M \right)^{-(n-\alpha +1)}  \left( 2^{k+1}  M \right)^{n-\alpha }  \mathcal{M}_\alpha f(0) \\ 
    &\leq  \frac{C}{M}  \mathcal{M}_\alpha f(0).
\end{align}
Combining (\ref{part1forestimateII}) and (\ref{part2forestimateII}), we obtain $II < \infty$ and therefore the proof is complete.
\end{proof}
We next use an argument based on \cite[p.~ 443]{MR3381284} to give the following result.

\begin{lemma}\label{goodtermforlocalintegrabilityofRieszpotential}
    Let $0< \alpha < n$ and $ f \in \mathcal{M}^{\alpha }_1 (\mathbb{R}^n)$. Then $I_\alpha f \in L^1_{loc } (\mathbb{R}^n)$ if and only if 
    \begin{align}\label{usefultermforRieszpotential}
        \int_{|y| >N}  \frac{|f(y)|}{|y|^{n-\alpha }} \; d y < \infty \text{ for any } N>0.
    \end{align}
\end{lemma}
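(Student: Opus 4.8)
The plan is to fix a radius $N>0$ and split the kernel defining $I_\alpha f$ into a near part and a far part, exactly as in Lemma~\ref{locallyintegrablaformodifiedRieszpotential}. Write, for $|x|<M$ with $N<M<\infty$,
\begin{align*}
\gamma(\alpha)\, I_\alpha f(x) = \int_{\mathbb{R}^n} \frac{f(y)}{|x-y|^{n-\alpha}}\,dy = \gamma(\alpha)\,\widetilde{I_\alpha^N} f(x) + \int_{|y|>N} \frac{\chi_{B_N}(y)}{|y|^{n-\alpha}}\, f(y)\,dy,
\end{align*}
where the last integral is a constant independent of $x$ (it does not depend on $x$ at all). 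The key observation is that $\widetilde{I_\alpha^N} f \in L^1_{loc}(\mathbb{R}^n)$ unconditionally by Lemma~\ref{locallyintegrablaformodifiedRieszpotential}, so $I_\alpha f$ differs from a locally integrable function by the constant $\frac{1}{\gamma(\alpha)}\int_{|y|>N} |y|^{\alpha-n} f(y)\,dy$. Hence $I_\alpha f \in L^1_{loc}(\mathbb{R}^n)$ if and only if this constant is finite, i.e. if and only if \eqref{usefultermforRieszpotential} holds for that particular $N$.

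For the two directions: if \eqref{usefultermforRieszpotential} holds for every $N>0$, then in particular for our fixed $N$, and the decomposition above immediately gives $I_\alpha f \in L^1_{loc}$. Conversely, if $I_\alpha f \in L^1_{loc}(\mathbb{R}^n)$, then from the decomposition the constant $\int_{|y|>N}|y|^{\alpha-n}f(y)\,dy$ must be finite for that $N$; but then it is automatically finite for every $N'>0$ as well, since for $N'\leq N$ the extra piece $\int_{N'<|y|\leq N}|y|^{\alpha-n}|f(y)|\,dy \leq (N')^{\alpha-n}\int_{|y|\leq N}|f(y)|\,dy$ is finite because $f$ is locally integrable (indeed $f\in\mathcal{M}^\alpha_1(\mathbb{R}^n)$ controls $\int_{|y|\leq N}|f|\,dy$ by $N^n \ell(Q)^{-n}\|f\|_{\mathcal{M}^\alpha_1}$-type bounds, so this tail is harmless), while for $N'>N$ the integral only decreases.

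The main obstacle, such as it is, is purely bookkeeping: one must be careful that the integral defining $\widetilde{I_\alpha^N}f(x)$ and the constant tail integral are not merely formally but genuinely separable — that is, that the integrand $|y|^{\alpha-n}\chi_{B_N}(y)|f(y)|$ is measurable and that splitting $\int \frac{f(y)}{|x-y|^{n-\alpha}}\,dy$ as stated does not involve subtracting two infinities. This is where one invokes $f\in\mathcal{M}^\alpha_1(\mathbb{R}^n)$ together with Lemma~\ref{locallyintegrablaformodifiedRieszpotential}: the latter guarantees $\widetilde{I_\alpha^N}f$ is a.e.\ finite on bounded sets, so for a.e.\ $x$ with $|x|<M$ the splitting is a legitimate identity between a.e.-finite-or-$+\infty$ quantities (all integrands being of one sign after taking absolute values), and the equivalence follows. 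No delicate estimates beyond those already proved in Lemma~\ref{locallyintegrablaformodifiedRieszpotential} are needed.
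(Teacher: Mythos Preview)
Your proposal is correct and takes essentially the same approach as the paper: both invoke Lemma~\ref{locallyintegrablaformodifiedRieszpotential} to get $\widetilde{I_\alpha^N} f \in L^1_{loc}(\mathbb{R}^n)$ and then use the identity $I_\alpha f = \widetilde{I_\alpha^N} f + \frac{1}{\gamma(\alpha)}\int_{|y|>N}|y|^{\alpha-n}f(y)\,dy$ to reduce the question to finiteness of the tail constant. The paper phrases the forward direction by selecting a point $x_0$ at which both $I_\alpha f$ and $\widetilde{I_\alpha^N} f$ are finite, while you argue at the level of ``locally integrable plus constant'' and add the (easy) remark that finiteness for one $N$ propagates to all $N$; these are cosmetic differences.
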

\begin{proof}
  Suppose that $I_\alpha f \in  L^1_{loc } (\mathbb{R}^n)$. It follows from Lemma \ref{locallyintegrablaformodifiedRieszpotential} that $\Tilde{I_\alpha^N } f \in L^1_{loc}(\mathbb{R}^n)$ for any $N>0$ and thus for every $N>0$ there exists $x_0 \in \mathbb{R}^n$ such that both $I_\alpha f (x_0) $ and $\Tilde{I_\alpha^N }  f (x_0) $ are finite. In particular, this implies that
\begin{align*}
   \frac{1}{\gamma(\alpha)}  \int_{|y| >N}  \frac{f(y)}{|y|^{n-\alpha }} \; d(y) =I_\alpha f (x_0) - \Tilde{I_\alpha ^N} f (x_0) < \infty,
\end{align*}
and therefore
 \begin{align*}
        \int_{|y| >N}  \frac{|f(y)|}{|y|^{n-\alpha }} \; d  y < \infty \text{ for any } N>0.
    \end{align*}

Now suppose that (\ref{usefultermforRieszpotential}) holds. Since  $\Tilde{I_\alpha^N } f \in L^1_{loc} (\mathbb{R}^n)$ by Lemma \ref{locallyintegrablaformodifiedRieszpotential} and the identity 
\begin{align*}
    I_\alpha f (x) &= \Tilde{I_\alpha ^N} f(x) +  \frac{1}{\gamma(\alpha)}\int_{|y| >N}  \frac{f(y)}{|y|^{n-\alpha }} \; dy \\
    &\leq \Tilde{I_\alpha ^N} f(x) +  \frac{1}{\gamma(\alpha)}\int_{|y| >N}  \frac{|f(y)|}{|y|^{n-\alpha }} \; dy
\end{align*}
  holds if $\Tilde{I_\alpha^N } f (x)$ is well-defined and finite (and thus holds for Lebesgue almost every $x$), we obtain $I_\alpha f  \in L^1_{loc}(\mathbb{R}^n)$.
\end{proof}

\begin{lemma}\label{quasicontinuouslowerimplyhigher}
    
Let $0<\beta_1 \leq \beta_2 \leq n$ and $Q \subseteq \mathbb{R}^n$ be a cube. If $f: Q \to \mathbb{R}$ is $\mathcal{H}^{\beta_1}_\infty$-quasicontinuous in $Q$, then $f$ is $\mathcal{H}^{\beta_2}_\infty$-quasicontinuous in $Q$.

\end{lemma}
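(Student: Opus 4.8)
The plan is to compare the two Hausdorff contents $\mathcal{H}^{\beta_1}_\infty$ and $\mathcal{H}^{\beta_2}_\infty$ on subsets of the fixed bounded cube $Q$, and then transfer quasicontinuity through the definition. The key quantitative fact I would establish first is that, for $E \subseteq Q$ and $\beta_1 \le \beta_2$, one has $\mathcal{H}^{\beta_2}_\infty(E) \le C(n,\beta_1,\beta_2,\ell(Q))\, \mathcal{H}^{\beta_1}_\infty(E)$; equivalently, small $\beta_1$-content forces small $\beta_2$-content. To see this, take any covering of $E$ by balls $B(x_i,r_i)$ with $\sum_i \omega_{\beta_1} r_i^{\beta_1} < \mathcal{H}^{\beta_1}_\infty(E) + \epsilon$. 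Since $E \subseteq Q$ it is no loss to assume each $r_i$ is at most $\mathrm{diam}(Q)$ (a ball of larger radius may be shrunk without uncovering the part of $E$ it meets — or one simply intersects the cover with $Q$ and re-covers, standard for Hausdorff content). Then $r_i^{\beta_2} = r_i^{\beta_1} r_i^{\beta_2 - \beta_1} \le (\mathrm{diam}\,Q)^{\beta_2-\beta_1} r_i^{\beta_1}$, so summing gives $\sum_i \omega_{\beta_2} r_i^{\beta_2} \le C \sum_i \omega_{\beta_1} r_i^{\beta_1}$ with $C = (\omega_{\beta_2}/\omega_{\beta_1})(\mathrm{diam}\,Q)^{\beta_2-\beta_1}$. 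Taking the infimum over covers and letting $\epsilon \to 0$ yields the claimed content comparison.

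With this in hand the rest is essentially bookkeeping against the definition of $\mathcal{H}^\beta_\infty$-quasicontinuity given in Section 2. Let $\epsilon > 0$. By the content comparison there is $\delta = \delta(\epsilon) > 0$ (namely $\delta = \epsilon / C$) such that any set of $\mathcal{H}^{\beta_1}_\infty$-measure less than $\delta$ has $\mathcal{H}^{\beta_2}_\infty$-measure less than $\epsilon$. Since $f$ is $\mathcal{H}^{\beta_1}_\infty$-quasicontinuous in $Q$, there is an open set $O \subseteq Q$ with $\mathcal{H}^{\beta_1}_\infty(O) < \delta$ and $f|_{Q \setminus O}$ continuous. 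Then the very same open set $O$ satisfies $\mathcal{H}^{\beta_2}_\infty(O) < \epsilon$, while $f|_{Q\setminus O}$ is still continuous. Since $\epsilon$ was arbitrary, $f$ is $\mathcal{H}^{\beta_2}_\infty$-quasicontinuous in $Q$.

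I do not anticipate a serious obstacle here; the only mildly delicate point is the normalization of the radii in the covering argument — one must make sure that restricting attention to covers by balls of radius $\le \mathrm{diam}(Q)$ does not change the infimum defining $\mathcal{H}^{\beta_1}_\infty(E)$ for $E \subseteq Q$, which is a standard observation for Hausdorff content on a bounded set (a ball much larger than $Q$ can always be replaced by $Q$ itself, or by a comparable ball, at the cost of a dimensional constant absorbed into $C$). Everything else is a direct unwinding of definitions, and no properties of $f$ beyond $\mathcal{H}^{\beta_1}_\infty$-quasicontinuity are needed.
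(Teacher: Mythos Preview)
Your proof is correct and takes a genuinely different route from the paper. You work directly with the definition of quasicontinuity: you establish the content comparison $\mathcal{H}^{\beta_2}_\infty(E) \leq C(\beta_1,\beta_2,Q)\,\mathcal{H}^{\beta_1}_\infty(E)$ for subsets $E$ of the bounded cube $Q$ via an elementary covering argument (near-optimal covers have radii bounded by $\mathrm{diam}(Q)$, so $r_i^{\beta_2}\le (\mathrm{diam}\,Q)^{\beta_2-\beta_1}r_i^{\beta_1}$), and then reuse the same open exceptional set from $\beta_1$-quasicontinuity in the definition of $\beta_2$-quasicontinuity. The paper instead passes through the $L^1$ theory: it invokes the density of $C_b(Q)$ to pick $\phi_j \in C_b(Q)$ with $\int_Q |f-\phi_j|\,d\mathcal{H}^{\beta_1}_\infty \to 0$, and then applies a capacitary embedding (Lemma~2.2 of \cite{chen2023capacitary}) together with the Choquet H\"older inequality (Lemma~\ref{basicChoquet}(iii)) to show the same sequence converges in $L^1(Q;\mathcal{H}^{\beta_2}_\infty)$, whence $\beta_2$-quasicontinuity. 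Your argument is more self-contained---it needs no external lemmas and makes no implicit integrability assumption on $f$---while the paper's route produces, as a by-product, a quantitative $L^1$-level comparison between the two Choquet integrals. For the bare statement of the lemma, your direct content comparison is the cleaner path.
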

\begin{proof}
  Without loss of generality, we may assume $\mathcal{H}^{\beta_2}_\infty (Q) >0.$  Suppose that $f$ is $\mathcal{H}^{\beta_1}_\infty$-quasicontinuous in $Q$. Then there exists a sequence of functions $\{ \phi_j  \}_j^\infty \subseteq C_b (Q)$ such that
    \begin{align*}
        \lim\limits_{j \to \infty} \int_Q |f - \phi_j| \; d \mathcal{H}^{\beta_1}_\infty = 0 .
    \end{align*}
Yet an application of Lemma 2.2 in \cite{chen2023capacitary} and H\"older inequality in Lemma \ref{basicChoquet}  yields the estimate
\begin{align*}
    \int_Q |f - \phi_j| \;d\mathcal{H}^{\beta_2}_\infty& \leq \omega_{\beta_1}  \left( \frac{1}{ \omega_{\beta_2}} \right)^{\frac{\beta_2}{\beta_1}} \left( \int_Q | f- \phi_j|^{\frac{\beta_1}{\beta_2}} \; d \mathcal{H}^{\beta_1}_\infty\right)^{\frac{\beta_2}{\beta_1}}\\
  & \leq \omega_{\beta_1}  \left( \frac{1}{ \omega_{\beta_2}} \right)^{\frac{\beta_2}{\beta_1}} \left( 2 \left( \int_Q |f - \phi_j| \; \mathcal{H}^{\beta_1}_\infty \right)^{\frac{\beta_2}{\beta_1}} \mathcal{H}^{\beta_1 }_\infty (Q)^{1- \frac{\beta_1}{\beta_2}} \right)^{\frac{\beta_2}{\beta_1 }} \\
    &\leq 2^{\frac{\beta_2}{\beta_1}} \omega_{\beta_1}  \left( \frac{1}{ \omega_{\beta_2}} \right)^{\frac{\beta_2}{\beta_1}} \mathcal{H}^{\beta_1}_\infty (Q)^{\frac{\beta_2}{\beta_1} -1} \int_Q |f - \phi_j| \; d \mathcal{H}^{\beta_1}_\infty
\end{align*}
which converges to $0$ as $j$ goes to $\infty$. This completes the proof.
\end{proof}
We next present a lemma concerning the Riesz potential of a function $\phi \in \mathcal{M}^\alpha_1 (\mathbb{R}^n)$ with compact support. This lemma will help us verify the quasicontinuity of $I_\alpha f$ with respect to the Hausdorff content in Lemma \ref{forcompactmorrey}.

\begin{lemma}\label{Morre1ypreciseestimate}
      Let $0< \alpha<n$, $0<\varepsilon \leq \alpha$, $Q$ be an open cube in $\mathbb{R}^n$ with centre $x_0$ and $\phi$  be a real-valued function in $\mathbb{R}^n$. If $\supp{\phi} \subseteq 2Q$, then there exist constants $C'$ and $C''$ depending on $n$, $\alpha$ and $\varepsilon$ such that
\begin{align}\label{Morrey1first}
       \int_Q | I_\alpha \phi | \; d \mu \leq C' \ell(Q)^{ \varepsilon} \Vert \phi \Vert_{L^1(\mathbb{R}^n)}\leq C'' \ell(Q)^{n-\alpha +\varepsilon} \mathcal{M}_\alpha \phi (x_0).
\end{align}
holds for every $\mu \in \mathcal{M}^{n-\alpha +\varepsilon} (\mathbb{R}^n)$ with $\Vert \mu \Vert_{\mathcal{M}^{n-\alpha + \varepsilon} (\mathbb{R}^n)} \leq 1  $.  
\end{lemma}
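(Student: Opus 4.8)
\textbf{Proof plan for Lemma \ref{Morre1ypreciseestimate}.}

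The plan is to prove the two inequalities in \eqref{Morrey1first} separately, working with the Choquet-integral reformulation via the duality \eqref{HB} only implicitly (here we already have a fixed measure $\mu$ with $\|\mu\|_{\mathcal{M}^{n-\alpha+\varepsilon}} \le 1$, so it suffices to bound $\int_Q |I_\alpha\phi|\,d\mu$ uniformly in such $\mu$). The second inequality is the easy one: since $\supp\phi \subseteq 2Q$, the defining property of the fractional maximal operator gives $\|\phi\|_{L^1(\mathbb{R}^n)} = \int_{2Q}|\phi| \le |2Q|^{(n-\alpha)/n}\cdot\big(\ell(2Q)^{-(n-\alpha)}\int_{2Q}|\phi|\big) \le C\,\ell(Q)^{n-\alpha}\,\mathcal{M}_\alpha\phi(x_0)$, using that $2Q \subseteq B(x_0, r)$ for $r \asymp \ell(Q)$. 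So the content of the lemma is really the first inequality $\int_Q |I_\alpha\phi|\,d\mu \le C'\ell(Q)^\varepsilon \|\phi\|_{L^1}$.

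For that first inequality I would apply Tonelli's theorem to write
\begin{align*}
\int_Q |I_\alpha \phi|\,d\mu \le \frac{1}{\gamma(\alpha)}\int_Q \int_{\mathbb{R}^n} \frac{|\phi(y)|}{|x-y|^{n-\alpha}}\,dy\,d\mu(x) = \frac{1}{\gamma(\alpha)}\int_{\mathbb{R}^n} |\phi(y)| \left(\int_Q \frac{d\mu(x)}{|x-y|^{n-\alpha}}\right) dy,
\end{align*}
so it suffices to show that the inner integral $\int_Q |x-y|^{-(n-\alpha)}\,d\mu(x)$ is bounded by $C'\ell(Q)^\varepsilon$ uniformly in $y$. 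This is a standard estimate for a Morrey measure of dimension $\beta := n-\alpha+\varepsilon$: slice the kernel as $\int_Q |x-y|^{\alpha-n}\,d\mu(x) = (n-\alpha)\int_0^\infty t^{\alpha-n-1}\mu(\{x\in Q: |x-y|<t\})\,dt$ (or dyadically decompose the region $\{2^{-k-1}\mathrm{diam}(Q) \le |x-y| < 2^{-k}\mathrm{diam}(Q)\}$), bound $\mu(\{x \in Q : |x-y| < t\}) \le \mu(B(y,t)) \le t^\beta$ for $t$ small and $\le \mu(B(y, C\ell(Q))) \le (C\ell(Q))^\beta$ for $t$ large (since the relevant ball is contained in a ball of radius comparable to $\ell(Q)$ once $t$ exceeds $\mathrm{diam}\,Q$), and observe that the exponent works out because $\beta > n-\alpha$, i.e. $\alpha - n - 1 + \beta = \varepsilon - 1 > -1$ makes the integral near $0$ converge, yielding a bound of the form $C\ell(Q)^{\beta-(n-\alpha)} = C\ell(Q)^\varepsilon$. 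Combining with the Tonelli step gives $\int_Q |I_\alpha\phi|\,d\mu \le C'\ell(Q)^\varepsilon\|\phi\|_{L^1}$.

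The main obstacle — really the only non-mechanical point — is getting the kernel integral estimate $\int_Q |x-y|^{\alpha-n}\,d\mu(x) \lesssim \ell(Q)^\varepsilon$ with a constant independent of $y$ (note $y$ ranges over all of $\mathbb{R}^n$, though only $y \in \mathrm{supp}\,\phi \subseteq 2Q$ matters, and in any case $\{x \in Q\}$ keeps $x$ bounded): one has to be careful that the upper cutoff in the radial integral is $\mathrm{diam}(Q) \asymp \ell(Q)$ rather than $\infty$, which is exactly what produces the finite power $\ell(Q)^\varepsilon$ and uses $\varepsilon \le \alpha < n$ only through $\beta \le n$ and $\beta > n-\alpha$. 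Everything else — Tonelli, the crude $L^1$-to-$\mathcal{M}_\alpha$ comparison, and tracking the constants' dependence on $n,\alpha,\varepsilon$ — is routine.
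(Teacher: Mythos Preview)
Your argument is correct, and it takes a genuinely different route from the paper. The paper first bounds $|I_\alpha\phi(x)|$ pointwise by $C_\eta\,\ell(Q)^{\alpha-\eta}\,\mathcal{M}_\eta\phi(x)$ via a dyadic annular decomposition, then invokes the weak-type estimate for the fractional maximal operator (Lemma~\ref{Adamslazylemma}\,(ii)) to control $\|\mathcal{M}_\eta\phi\|_{L^{q,\infty}(\mathcal{H}^{n-\alpha+\varepsilon}_\infty)}$ by $\|\phi\|_{L^1}$, and closes with a Lorentz-scale H\"older inequality against $\chi_Q$, choosing $\eta=\alpha-\varepsilon/2$ so that $q>1$. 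Your approach bypasses maximal functions, Hausdorff content, and Lorentz duality entirely: after Tonelli, you estimate the potential $\int_Q|x-y|^{\alpha-n}\,d\mu(x)$ directly from the ball-growth condition $\mu(B(y,t))\le t^{n-\alpha+\varepsilon}$, which is the most classical way to bound Riesz potentials of Morrey measures and yields the constant $C'\ell(Q)^{\varepsilon}$ in one stroke. Your method is shorter and more self-contained for this lemma; the paper's method has the virtue of introducing the $\mathcal{M}_\eta$ machinery that is reused verbatim in the proofs of Lemmas~\ref{forcompactmorrey} and~\ref{Morreyforplemma} (where one needs estimates in $L^p(\mathcal{H}^\beta_\infty)$ rather than against a single $\mu$), so it fits more naturally into the paper's overall architecture. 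Your handling of the second inequality via $\|\phi\|_{L^1}\le C\ell(Q)^{n-\alpha}\mathcal{M}_\alpha\phi(x_0)$ matches the paper's.
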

\begin{proof}
 Let $\mu \in \mathcal{M}^{n-\alpha +\varepsilon} (\mathbb{R}^n)$ satisfying $\Vert \mu \Vert_{\mathcal{M}^{n-\alpha + \varepsilon} (\mathbb{R}^n)} \leq 1  $.  We first note that an usual dyadic splitting gives that for every $\eta \in [0,\alpha)$,
\begin{align}\label{usualdyadicsplitting}
\nonumber |I_\alpha \phi (x)|& \leq \frac{1}{\gamma(\alpha)}
\int_{2Q} \frac{|\phi(y)|}{|x-y|^{n-\alpha}}  \;dy \\
 \nonumber &\leq \frac{1}{\gamma(\alpha)} \sum_{k=1}^\infty \frac{1}{(2^{-k+5} l(Q))^{n-\alpha}} \int_{2^{-k+5}Q^x \setminus 2^{-k+4}Q^x} |\phi(y)| \;dy \\
&\leq \frac{C_\eta }{\gamma(\alpha)} \ell(Q)^{\alpha-\eta} \mathcal{M}_\eta \phi (x),
\end{align}
where $Q^x$ denotes the cube with centre $x$ and radius $\ell(Q)$. Moreover, Lemma \ref{Adamslazylemma} (ii) yields that for any $0\leq  \varepsilon <\alpha$, there exists a constant $A_2$ depending only on $n$, $\alpha$, $\varepsilon$ and $\eta$ such that
\begin{align*}
    \Vert \mathcal{M}_\eta \phi \Vert_{L^{q,\infty} ( \mathbb{R}^n; \mathcal{H}^{n - \alpha + \varepsilon}_\infty)} \leq A_2 \Vert \phi  \Vert_{L^{1}  (\mathbb{R}^n)},
\end{align*}
where $q = \frac{n-\alpha + \varepsilon}{n - \eta}$. Now choosing $\eta = \alpha - \frac{\varepsilon}{2} $, we have $q >1$ and therefore by H\"older inequality in Lorentz scale (see Theorem 1.4.16 in \cite{Grafakos_classical} for example) we deduce that there exists $C= C(n,\alpha,\varepsilon)$ such that
\begin{align}\label{mainestimateforIalphaphifirst}
\nonumber \int_{Q} \mathcal{M}_\eta  \phi (x) \; d\mu (x) &\leq C  \Vert \chi_{2Q} \Vert_{L^{q',1} (\mu)} \Vert \mathcal{M}_\eta \phi \Vert_{L^{q, \infty} (\mu)}\\ \nonumber
&\leq C   \ell(Q)^{\varepsilon+ \eta - \alpha} \Vert \mathcal{M}_\eta \phi \Vert_{L^{q, \infty} (\mathbb{R}^n;\mathcal{H}^{n-\alpha +\varepsilon}_\infty)}\\ 
& \leq  C \ell(Q)^{\varepsilon + \eta - \alpha} \Vert \phi \Vert_{L^1 (\mathbb{R}^n)} ,
\end{align}
where the second inequality follows by 
\begin{align*}
    \Vert \chi_{2Q} \Vert_{L^{q',1} (\mu)} \leq C \ell(Q)^{\frac{n-\alpha + \varepsilon}{q'}}= C\ell(Q)^{\varepsilon + \eta -\alpha}.
\end{align*}
The combination of (\ref{usualdyadicsplitting}) and (\ref{mainestimateforIalphaphifirst}) then gives
\begin{align}\label{forsharp1}
\nonumber \int_Q  | I_\alpha \phi (x) | \; d\mu(x)  &\leq C \ell(Q)^{\alpha - \eta} \int_{Q} \mathcal{M}_\eta  \phi (x) \; d\mu (x) \\ \nonumber
& \leq  C \ell(Q)^{\varepsilon} \Vert \phi \Vert_{L^1 (\mathbb{R}^n)} \\ \nonumber
& \leq C \ell(Q)^{n-\alpha + \varepsilon} \mathcal{M}_{\alpha }  \phi (x_0).
\end{align}
This completes the proof. 
\end{proof}

In the following lemma, we demonstrate that for functions $f$ satisfying the conditions of Lemma \ref{Morre1ypreciseestimate}, $I_\alpha f$ must belong to $L^1(Q;\mathcal{H}^\beta_\infty)$. Consequently, an analogue of estimate (\ref{Morre1ypreciseestimate}), in terms of the Choquet integral with respect to Hausdorff content, is established.

\begin{lemma}\label{forcompactmorrey}
    Let $0< \alpha<n$, $0<\varepsilon \leq \alpha$, $Q$ be an open cube in $\mathbb{R}^n$ with centre $x_0$ and $f$ be a real-valued function in $\mathbb{R}^n$. If $\supp{f} \subseteq 2Q$, then $I_\alpha f \in L^1(Q;\mathcal{H}^{n-\alpha +\varepsilon}_\infty)$ and there exists a constant $C$ depending on $n$, $\alpha$, $\varepsilon$ such that
    \begin{align}
        \int_Q | I_\alpha f  (x )| \; d \mathcal{H}^{n-\alpha + \varepsilon}_\infty \leq C \ell(Q)^{n-\alpha + \varepsilon} \mathcal{M}_\alpha f (x_0).
    \end{align}
\end{lemma}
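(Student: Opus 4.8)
The plan is to combine the pointwise/Choquet estimate from Lemma \ref{Morre1ypreciseestimate} with the duality characterization (\ref{HB}) of the Choquet integral and then to upgrade this to an honest statement that $I_\alpha f \in L^1(Q;\mathcal{H}^{n-\alpha+\varepsilon}_\infty)$ by verifying $\mathcal{H}^{n-\alpha+\varepsilon}_\infty$-quasicontinuity of $I_\alpha f$. Writing $\beta := n-\alpha+\varepsilon$ for brevity, the bound on the Choquet integral is formally immediate: since $I_\alpha f$ is a Riesz potential of a function supported in $2Q$, after splitting $f = f^+ - f^-$ we may assume $f \geq 0$, so that $I_\alpha f$ is lower semi-continuous and nonnegative; then (\ref{HB}) (in the lower semi-continuous form credited to Adams) gives
\begin{align*}
\int_Q |I_\alpha f|\,d\mathcal{H}^\beta_\infty \asymp \sup_{\|\mu\|_{\mathcal{M}^\beta}\le 1}\int_Q I_\alpha f\,d\mu \le C\,\ell(Q)^{n-\alpha+\varepsilon}\,\mathcal{M}_\alpha f(x_0),
\end{align*}
where the last inequality is exactly Lemma \ref{Morre1ypreciseestimate} applied to each competitor measure $\mu$, uniformly in $\mu$. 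For general real-valued $f$ one applies this to $f^+$ and $f^-$ separately and uses $\mathcal{M}_\alpha f^\pm \le \mathcal{M}_\alpha f$ together with Lemma \ref{basicChoquet}(ii) to recombine, absorbing the factor $2$ into $C$.

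The point requiring care is that the space $L^1(Q;\mathcal{H}^\beta_\infty)$ as defined in Section 2 consists of $\mathcal{H}^\beta_\infty$-quasicontinuous functions with finite Choquet norm, so merely bounding the Choquet integral does not by itself place $I_\alpha f$ in that space — one must also exhibit quasicontinuity. Here is where I would use the chain of earlier lemmas: first reduce to $f \geq 0$ as above. Next approximate $f$ from below by bounded functions $f_k := \min(f,k)\chi_{2Q} \uparrow f$; each $f_k \in L^1(\mathbb{R}^n)$ with compact support in $2Q$, so each $I_\alpha f_k$ is continuous on $\mathbb{R}^n$ (hence trivially $\mathcal{H}^\beta_\infty$-quasicontinuous), and $I_\alpha f_k \uparrow I_\alpha f$ pointwise by monotone convergence. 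Applying Lemma \ref{Morre1ypreciseestimate} to $\phi = f - f_k$ (note $\mathcal{M}_\alpha(f-f_k)(x_0) \to 0$ as $k\to\infty$ since $f \in \mathcal{M}^\alpha_1$, using that $\int_{2Q\cap\{f>k\}} f \to 0$) shows via (\ref{HB}) that $\int_Q |I_\alpha f - I_\alpha f_k|\,d\mathcal{H}^\beta_\infty \to 0$, i.e. $I_\alpha f_k \to I_\alpha f$ in the Choquet seminorm. Since $C_b(Q)$-approximation in the Choquet seminorm is the definition of quasicontinuity (the density statement recalled in Section 2, and the fact recorded there that quasicontinuous limits in this seminorm are quasicontinuous), we conclude $I_\alpha f$ is $\mathcal{H}^\beta_\infty$-quasicontinuous, hence genuinely lies in $L^1(Q;\mathcal{H}^\beta_\infty)$, and the displayed bound is the limit of the bounds for the $f_k$'s (or simply the bound already proved in the previous paragraph).

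The main obstacle — and the only genuinely nontrivial step — is justifying that $\mathcal{M}_\alpha(f-f_k)(x_0)\to 0$, which is what makes the Choquet-seminorm approximation work. This needs $f \in \mathcal{M}^\alpha_1(\mathbb{R}^n)$: one writes, for each $r>0$, $\frac{1}{r^{n-\alpha}}\int_{B(x_0,r)} (f-f_k)\,dy = \frac{1}{r^{n-\alpha}}\int_{B(x_0,r)\cap 2Q} (f-k)^+ \,dy$, and must show this tends to $0$ uniformly in $r$. For $r$ bounded (say $r \le \operatorname{diam}(2Q)$) this is dominated convergence applied to $\int_{2Q}(f-k)^+\,dy \to 0$ divided by the fixed power $r^{n-\alpha}$ — but uniformity in small $r$ needs the Morrey bound $\int_{B(x_0,r)\cap 2Q}(f-k)^+ \le \int_{B(x_0,r)} f \le \|f\|_{\mathcal{M}^\alpha_1} r^{n-\alpha}$ to be balanced against the smallness of the tail; the clean way is to fix $\delta>0$, choose $k$ so large that $\int_{2Q}(f-k)^+\,dy < \delta^{n-\alpha+1}$ say, and split into $r \le \delta$ (use the Morrey bound on $f$, giving $\lesssim \|f\|_{\mathcal{M}^\alpha_1}$ times something small after re-choosing the threshold — actually here one simply uses that on $r\le\delta$, $r^{-(n-\alpha)}\int_{B(x_0,r)}(f-k)^+ \le r^{-(n-\alpha)}\int_{B(x_0,r)} f \le \|f\|_{\mathcal{M}^\alpha_1}$, which is not small, so instead) — the correct argument is that $\mathcal{M}_\alpha$ is sublinear and $(f-k)^+ \le f$, so $\mathcal{M}_\alpha(f-k)^+(x_0)$ is finite, and one shows directly $\lim_{k} \sup_r r^{-(n-\alpha)}\int_{B(x_0,r)}(f-k)^+ = 0$ by a standard "uniform integrability against the Morrey scale" argument: for $r \ge \delta$, $r^{-(n-\alpha)}\int_{B(x_0,r)}(f-k)^+ \le \delta^{-(n-\alpha)}\int_{2Q}(f-k)^+ \to 0$; for $r < \delta$, $r^{-(n-\alpha)}\int_{B(x_0,r)}(f-k)^+ \le \mathcal{M}_\alpha(f\chi_{B(x_0,\delta)})(x_0)$, and since $f \in \mathcal{M}^\alpha_1$ one has $\mathcal{M}_\alpha(f\chi_{B(x_0,\delta)})(x_0) \to 0$ as $\delta \to 0$ — wait, this also is not automatic. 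In fact the honest resolution is that one does not need uniform smallness at all: it suffices to use Lemma \ref{Morre1ypreciseestimate} with the \emph{$L^1$} form of the bound, $\int_Q|I_\alpha(f-f_k)|\,d\mu \le C'\ell(Q)^\varepsilon \|f-f_k\|_{L^1(\mathbb{R}^n)} = C'\ell(Q)^\varepsilon\int_{2Q}(f-k)^+\,dy \to 0$ by dominated convergence, which requires only $f \in L^1(2Q)$ — guaranteed since $f\in\mathcal{M}^\alpha_1(\mathbb{R}^n)$ implies $f$ locally integrable. Thus the quasicontinuity approximation goes through with no uniform-in-$r$ issue, and the final Choquet bound follows either by the duality computation of the first paragraph or as the limit of the continuous approximants; this is the argument I would write.
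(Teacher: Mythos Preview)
Your argument is correct and follows essentially the same route as the paper: both obtain the Choquet bound from Lemma~\ref{Morre1ypreciseestimate} combined with the duality (\ref{HB}) applied to the lower semicontinuous function $\chi_Q I_\alpha|f|$, and both establish $\mathcal{H}^{n-\alpha+\varepsilon}_\infty$-quasicontinuity of $I_\alpha f$ by approximating $f$ in $L^1(2Q)$---using precisely the $L^1$-form of Lemma~\ref{Morre1ypreciseestimate} that you settle on after the digression---by functions whose Riesz potentials are continuous. Your choice of monotone truncations $f_k=\min(f,k)$ (after reducing to $f\ge 0$) in place of the paper's generic $C_b$-approximants is a mild simplification, since pointwise convergence $I_\alpha f_k\uparrow I_\alpha f$ is automatic and lets you bypass the paper's separate step (Fatou's lemma plus the fractional maximal estimate) identifying the abstract Banach-space limit $g$ with $I_\alpha f$.
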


\begin{proof}
By Lemma \ref{Morre1ypreciseestimate}, we have
\begin{align}\label{Morreylemmafirst}
       \int_Q | I_\alpha \phi | \; d \mu \leq C' \ell(Q)^{ \varepsilon} \Vert \phi \Vert_{L^1(\mathbb{R}^n)}\leq C'' \ell(Q)^{n-\alpha +\varepsilon} \Vert \phi \Vert_{\mathcal{M}^{ \alpha}_1 (\mathbb{R}^n)}
\end{align}
holds for every $\phi \in C_b(2Q)$ and 
 $\mu \in \mathcal{M}^{n-\alpha +\varepsilon} (\mathbb{R}^n)$ with $\Vert \mu \Vert_{\mathcal{M}^{n-\alpha + \varepsilon} (\mathbb{R}^n)} \leq 1  $.  Since $C_b(2Q)$ is dense in $L^1(2Q; \mathbb{R}^n)$ and $f \in L^1(2Q; \mathbb{R}^n) $, there exists a sequence of functions $\{ \phi_i \}_{i=1}^\infty \subseteq C_b(2Q)$ such that $\supp{\phi_i} \subseteq 2Q$ and 
\begin{align*}
    \int_{2Q} | \phi_i (x) - f(x)| \; dx \leq \frac{1}{4^i} \text{ for each } i \in \mathbb{R}^n.
\end{align*}
As $|I_\alpha \phi_i - I_\alpha \phi_j|$ is continuous, we may use estimate (\ref{Morre1ypreciseestimate}) and deduce that for $j \geq i$, there exists a constant $C>0$ depending on $n$, $\alpha$, $\varepsilon$ such that 
\begin{align}\label{quasiconvergentforIalphamu}
    \int_Q | I_\alpha \phi_i(x) - I_\alpha \phi_j (x)| \; d \mathcal{H}^{n-\alpha +\varepsilon}_\infty(x) \leq  \frac{ C \ell(Q)^{ \varepsilon} }{4^i} .
\end{align}
The combination of (\ref{equivalentnorm}) and (\ref{quasiconvergentforIalphamu}) then yields that there exists a constant $C'>0$ depending on $n$, $\alpha$ and $\varepsilon$ such that
\begin{align}
    \int_Q | I_\alpha \phi_i(x) - I_\alpha \phi_j (x)| \; d \mathcal{H}^{n-\alpha +\varepsilon,Q}_\infty(x) \leq  \frac{ C' \ell(Q)^{ \varepsilon} }{4^i} ,
\end{align}
where $\mathcal{H}^{n-\alpha +\varepsilon,Q}_\infty$ denotes the dyadic Hausdorff content given in (\ref{defdyadic}). Since the vector space $L^1(Q; \mathcal{H}^{n-\alpha +\varepsilon}_\infty)$ equipped with the norm $\Vert \cdot \Vert_{L^1(Q; \mathcal{H}^{n-\alpha +\varepsilon,Q}_\infty)}$ forms a Banach space, there exists $g \in L^1 (Q;\mathcal{H}^{n-\alpha +\varepsilon}_\infty)$ such that $I_\alpha \phi_i$ converges to $g$ in $L^1 (Q;\mathcal{H}^{n-\alpha +\varepsilon}_\infty)$ and 
\begin{align}\label{phitog}
    \lim\limits_{ i \to \infty} I_\alpha  \phi_i (x) = g(x) \text{ for }  \mathcal{H}^{n-\alpha +\varepsilon}_\infty \text{ almost every } x \in Q.
\end{align}

We next claim
\begin{align}\label{phitoIalphaf}
    \lim\limits_{ i \to \infty} I_\alpha \phi_i (x) = I_\alpha f (x) \text{ for }  \mathcal{H}^{n-\alpha +\varepsilon}_\infty \text{ almost every } x \in Q,
\end{align}
and thus $I_\alpha f \in L^1(Q;\mathcal{H}^{n-\alpha +\varepsilon}_\infty)$. To this end, we observe that an argument similar to (\ref{mainestimateforIalphaphifirst}) gives that 
\begin{align*}
    \int_Q \mathcal{M}_\eta ( \phi_i-f) (x) \; d \mu (x) 
&\leq C\ell(Q)^{\varepsilon + \eta - \alpha }\Vert  \phi_i - f \Vert_{L^{1 } (\mathbb{R}^n)} .
\end{align*}
Moreover, as $\mathcal{M}_\eta ( \phi_i -f ) \chi_Q$ is a lower semi-continuous function, we obtain by (\ref{HB}) that 
\begin{align}
    \int_Q \mathcal{M}_\eta ( \phi_i -f  )(x) \;d \mathcal{H}^{n-\alpha +\varepsilon}_\infty (x) \leq C \ell(Q)^{\varepsilon + \eta - \alpha } \Vert  \phi_i -f  \Vert_{L^{1 } (\mathbb{R}^n)},
\end{align}
where $\eta = \alpha - \frac{\varepsilon}{2}$.  As Fatou's lemma holds for Choquet integrals with respect to Hausdorff  content (see (1.2) in \cite{PS_2023} for example), we deduce that
\begin{align*}
  0& \leq   \int_Q \liminf\limits_{i\to 0} \mathcal{M}_\eta (\phi_i - f) (x ) \; d \mathcal{H}^{n-\alpha + \varepsilon}_\infty (x )\\
  & \leq \liminf\limits_{i\to 0} \int_Q  \mathcal{M}_\eta  (\phi_i - f) (x ) \; d \mathcal{H}^{n-\alpha + \varepsilon}_\infty (x )\\ 
  &\leq \liminf\limits_{i\to 0} C   \ell(Q)^{\varepsilon + \eta - \alpha }  \Vert  \phi_i -f  \Vert_{L^{1 } (\mathbb{R}^n)}\\
    & = 0.
\end{align*}
As a consequence, we conclude that
\begin{align*}
     \liminf\limits_{i\to 0} \mathcal{M}_\eta  (\phi_i - f  ) (x ) = 0 \text{ for } \mathcal{H}^{n-\alpha + \varepsilon}_\infty \text{ almost every } x\in Q.
\end{align*}
Yet a standard dyadic splitting similar to (\ref{usualdyadicsplitting}) yields that
\begin{align*}
    | I_\alpha \phi_i (x )- I_\alpha f(x)| \leq C_\eta \ell(Q)^{\frac{\varepsilon}{2}} \mathcal{M}_\eta  (\phi_i - f) (x ) \text{ for every } x\in Q
\end{align*}
leading to
\begin{align}\label{IaphaIalphaphii}
     \liminf\limits_{i\to 0} | I_\alpha \phi_i (x) - I_\alpha f(x)| = 0 \text{ for } \mathcal{H}^{n-\alpha + \varepsilon}_\infty  \text{ almost every } x\in Q.
\end{align}
Combining (\ref{phitog}) and (\ref{IaphaIalphaphii}), we obtain that
\begin{align*}
    \lim_{i \to 0} I_\alpha \phi_i (x) = g(x) = I_\alpha f (x) \text{ for } \mathcal{H}^{n-\alpha + \varepsilon}_\infty  \text{ almost every } x\in Q,
\end{align*}
as claimed.

Finally, since $ \chi_Q I_\alpha |f|$ is a lower semi-continuous function, the combination of (\ref{HB}) and (\ref{forsharp1}) yields that there exists a constant $C$ depending on $n$, $\alpha $ and $\varepsilon$ such that
\begin{align}\label{forsharp2}
  \nonumber   \int_Q |I_\alpha f | \; d \mathcal{H}^{n-\alpha +\varepsilon}_\infty  &\leq \int_{\mathbb{R}^n}  \chi_Q I_\alpha |f|  \; d \mathcal{H}^{n-\alpha +\varepsilon}_\infty\\
   \nonumber     & \leq C \sup_{\|\mu\|_{\mathcal{M}^{n-\alpha +\varepsilon}} \leq 1} \int_{Q} I_\alpha |f|  \;d\mu \\
  &\leq C \ell(Q)^{n-\alpha + \varepsilon} \mathcal{M}_{ \alpha } f (x_0).
\end{align}
This completes the proof.


\end{proof}
The subsequent Lemma improves Lemma \ref{forcompactmorrey} in the case where $p\geq 1$.
\begin{lemma}\label{Morreyforplemma}
    Let $0<\alpha < n$, $1 \leq p  \leq \frac{n}{\alpha}$, $Q$ be an open cube and $f$ be a real-valued function in $\mathbb{R}^n$. If $\supp{f} \subseteq 2Q$, $f \in \mathcal{M}^\alpha_p (\mathbb{R}^n)$ and $\beta \in (n-\alpha p,n]$, then $I_\alpha f \in L^1 (Q; \mathcal{H}^\beta_\infty)$ and there exist constant $C'$ and $C''$ depending on $n$, $\alpha$, $\beta$, $p$ such that
\begin{align}\label{extensionforcompactmorrey}
    \int_Q |I_\alpha f| \; d\mathcal{H}^\beta_\infty \leq C' \ell(Q)^{\beta+\alpha -\frac{n}{p}} \Vert f \Vert_{ L^p(\mathbb{R}^n)} \leq C'' \ell(Q)^\beta \Vert f \Vert_{\mathcal{M}^\alpha_p (\mathbb{R}^n)}.
\end{align}
   
\end{lemma}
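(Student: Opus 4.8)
The plan is to mimic the proof of Lemma~\ref{forcompactmorrey}, replacing the $L^1$ bound on $\phi$ and the weak-type estimate for $\mathcal{M}_\eta$ by their $L^p$ analogues. First I would record the pointwise dyadic estimate $|I_\alpha f(x)| \leq \frac{C_\eta}{\gamma(\alpha)} \ell(Q)^{\alpha-\eta}\mathcal{M}_\eta f(x)$ valid for any $\eta \in [0,\alpha)$ and $x \in Q$, exactly as in (\ref{usualdyadicsplitting}); this uses only $\supp f \subseteq 2Q$. The role of the exponential/Morrey scaling is then played by Lemma~\ref{Adamslazylemma}(i): choosing $\eta$ so that $n - \eta p = \beta$, i.e.\ $\eta = \frac{n-\beta}{p}$, we get $\eta \in [0,\alpha)$ precisely because $\beta \in (n - \alpha p, n]$, and Lemma~\ref{Adamslazylemma}(i) (strong $(p,p)$ type, applicable since $1 < p < n/\eta$ when $\beta < n$; the endpoint $p=1$ handled by Lemma~\ref{forcompactmorrey} or a limiting/weak-type argument, and $\beta = n$ handled separately via classical Morrey embedding) gives
\begin{align*}
\left( \int_{\mathbb{R}^n} (\mathcal{M}_\eta f)^p \, d\mathcal{H}^\beta_\infty \right)^{1/p} \leq A_1 \Vert f \Vert_{L^p(\mathbb{R}^n)}.
\end{align*}

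Next I would combine these: integrating the pointwise bound over $Q$ against $\mathcal{H}^\beta_\infty$ and applying Lemma~\ref{basicChoquet}(iii) (H\"older for the Choquet integral) with exponents $p$ and $p'$, I get
\begin{align*}
\int_Q |I_\alpha f| \, d\mathcal{H}^\beta_\infty &\leq C \ell(Q)^{\alpha-\eta} \int_Q \mathcal{M}_\eta f \, d\mathcal{H}^\beta_\infty \\
&\leq C \ell(Q)^{\alpha-\eta} \left( \int_Q (\mathcal{M}_\eta f)^p \, d\mathcal{H}^\beta_\infty \right)^{1/p} \left( \int_Q 1 \, d\mathcal{H}^\beta_\infty \right)^{1/p'} \\
&\leq C \ell(Q)^{\alpha-\eta} \cdot A_1 \Vert f \Vert_{L^p(\mathbb{R}^n)} \cdot \ell(Q)^{\beta/p'},
\end{align*}
where I use $\mathcal{H}^\beta_\infty(Q) \aeq \ell(Q)^\beta$. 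Since $\alpha - \eta + \beta/p' = \alpha - \frac{n-\beta}{p} + \beta - \frac{\beta}{p} = \beta + \alpha - \frac{n}{p}$, this is exactly the first inequality in (\ref{extensionforcompactmorrey}). The second inequality then follows from the definition (\ref{defMorrey}) of the Morrey norm applied to the cube $2Q$ (whose sidelength is comparable to $\ell(Q)$): $\Vert f \Vert_{L^p(\mathbb{R}^n)} = \Vert f \Vert_{L^p(2Q)} \leq \ell(2Q)^{n/p - \alpha} \Vert f \Vert_{\mathcal{M}^\alpha_p(\mathbb{R}^n)}$, absorbing the constant.

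The remaining point is the membership $I_\alpha f \in L^1(Q; \mathcal{H}^\beta_\infty)$, which requires knowing $I_\alpha f$ is $\mathcal{H}^\beta_\infty$-quasicontinuous on $Q$; I would obtain this exactly as in the proof of Lemma~\ref{forcompactmorrey}, by approximating $f$ in $L^p(2Q)$ by $\phi_i \in C_b(2Q)$ with $\Vert \phi_i - f \Vert_{L^p} \leq 4^{-i}$, showing $\{I_\alpha \phi_i\}$ is Cauchy in the Banach space $L^1(Q; \mathcal{H}^{\beta,Q}_\infty)$ via the estimate just proved applied to $\phi_i - \phi_j$, and identifying the limit with $I_\alpha f$ through the Fatou argument for $\liminf_i \mathcal{M}_\eta(\phi_i - f) = 0$ $\mathcal{H}^\beta_\infty$-a.e.\ (using (\ref{HB}) for the lower semicontinuous $\mathcal{M}_\eta(\phi_i-f)\chi_Q$). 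The main obstacle is bookkeeping at the endpoints: verifying $\eta = (n-\beta)/p \in [0,\alpha)$ and $p < n/\eta$ so that Lemma~\ref{Adamslazylemma}(i) genuinely applies throughout the stated range, and cleanly handling $p=1$ (where one falls back on Lemma~\ref{forcompactmorrey} with $\varepsilon = \alpha - \eta$) and $\beta = n$ (where $\mathcal{H}^n_\infty \aeq$ Lebesgue measure and the classical Sobolev–Morrey estimate $\Vert I_\alpha f \Vert_{L^\infty} \lesssim \ell(Q)^{\alpha - n/p}\Vert f\Vert_{L^p}$ gives the bound directly). None of these is deep, but they need to be stated carefully.
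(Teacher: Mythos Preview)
Your proposal is correct and follows essentially the same route as the paper: the same dyadic pointwise bound, the same choice $\eta=(n-\beta)/p$ fed into Lemma~\ref{Adamslazylemma}(i), H\"older for the Choquet integral, and the $p=1$ case deferred to Lemma~\ref{forcompactmorrey}. The only notable difference is that the paper handles quasicontinuity more directly: once the inequality $\int_Q |I_\alpha g|\,d\mathcal{H}^\beta_\infty \leq C\ell(Q)^{\beta+\alpha-n/p}\Vert g\Vert_{L^p}$ is established, applying it with $g=f-\phi_j$ for $\phi_j\in C_c^\infty$ already gives $I_\alpha\phi_j\to I_\alpha f$ in the $\mathcal{H}^\beta_\infty$-integral, so the Cauchy/Fatou identification argument you borrow from Lemma~\ref{forcompactmorrey} is unnecessary here.
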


\begin{proof}
It is sufficient to prove the case $1<p \leq \frac{n}{\alpha}$, as Lemma \ref{forcompactmorrey} yields (\ref{extensionforcompactmorrey}) for $p=1$. To this end, we again use the usual dyadic splitting in (\ref{usualdyadicsplitting}) and obtain that
\begin{align*}
| I_\alpha f  (x) |
&\leq \frac{C_\eta }{\gamma(\alpha)} \ell(Q)^{\alpha-\eta} \mathcal{M}_\eta f (x)
\end{align*}
holds for any $\eta \in [0,\alpha)$. Fix $\beta \in (n-\alpha p,n]$ and choose $\eta $ so that $n- \eta p =\beta$. Lemma \ref{Adamslazylemma} (i) then yields that there exists a constant $A_2$ depending only on $n$, $\beta$ and $p$ such that

\begin{align}\label{Saywer'sestimate}
    \Vert \mathcal{M}_\eta f \Vert_{L^{p} (\mathbb{R}^n; \mathcal{H}^{\beta}_\infty)} \leq A_2 \Vert f \Vert_{L^{p}  (\mathbb{R}^n)}.
\end{align}
 Applying H\"older's inequality of the Hausdorff content (see Lemma \ref{basicChoquet} (iii)) and estimate (\ref{Saywer'sestimate}), we deduce that there exists a constant $C= C(n,\alpha, \beta, p)$ such that
\begin{align}\label{Morreypmainestimate}
\nonumber \int_Q |I_\alpha f| \; d \mathcal{H}^\beta_\infty &\leq C \ell(Q)^{\alpha - \eta} \int_{Q} \mathcal{M}_\eta  f \; d\mathcal{H}^\beta_\infty\\ \nonumber
&\leq C \ell(Q)^{\alpha - \eta} \Vert \chi_{Q} \Vert_{L^{p'} (\mathbb{R}^n;\mathcal{H}^\beta_\infty)} \Vert \mathcal{M}_\eta f \Vert_{L^{p} (\mathbb{R}^n;\mathcal{H}^\beta_\infty)}\\ \nonumber
&\leq C \ell(Q)^{\beta + \alpha - \frac{n}{p}} \Vert f \Vert_{L^{p } (\mathbb{R}^n)}\\
&\leq C \ell(Q)^\beta \Vert f \Vert_{\mathcal{M}^\alpha_p (\mathbb{R}^n)},
\end{align}
which verifies (\ref{extensionforcompactmorrey}).
 Moreover, since $C_c^\infty (\mathbb{R}^n)$ is dense in $L^p (\mathbb{R}^n)$, there exists a sequence of functions $\{ \phi_j \}_{j=1 }^\infty \subseteq C^\infty_c(\mathbb{R}^n)$ such that for each $j$,
\begin{align*}
    \lim\limits_{j \to \infty} \Vert f - \phi_j \Vert_{L^p (\mathbb{R}^n)} =0.
\end{align*} 
In particular, it follows by (\ref{Morreypmainestimate}) that
\begin{align*}
     \int_Q |I_\alpha f - I_\alpha \phi_j| \; d \mathcal{H}^\beta_\infty 
     & \leq C \ell(Q)^{\alpha + \beta - \frac{n}{p}} \Vert f - \phi_j \Vert_{L^p (\mathbb{R}^n)} \to 0
\end{align*}
as $j \to \infty$. Since $I_\alpha \phi_j \in C_b(Q)$ for each $j \in \mathbb{N}$, we have by definition of $\mathcal{H}^\beta_\infty$-quasicontinuity that $I_\alpha f$ is $\mathcal{H}^\beta_\infty$-quasicontinuous and thus $I_\alpha f \in L^1(Q; \mathcal{H}^\beta_\infty)$.
\end{proof}

The following lemma presents an analogue of Lemma \ref{Morre1ypreciseestimate} for when $f$ belongs to the weak Lebesgue space $L^{n/\alpha, \infty}(\mathbb{R}^n)$.

\begin{lemma}\label{preciseforweakLnalpha}
       Let $0< \alpha<n$, $0<\beta \leq n$, $Q$ be an open cube in $\mathbb{R}^n$ and $f$ be a real-valued function in $\mathbb{R}^n$. If $\supp{f} \subseteq 2Q$ and $f \in L^{n/ \alpha, \infty } (\mathbb{R}^n)$, then $I_\alpha f \in L^1(Q; \mathcal{H}^\beta_\infty)$ and  for every $1 < p < n/ \alpha$,  there exists a constant $C$ depending on $n$, $\alpha$, $\beta$ such that
\begin{align}\label{weakLnsecond}
     \int_Q | I_\alpha f | \; d \mathcal{H}^\beta_\infty \leq C \ell(Q)^{\beta} \Vert f \Vert_{L^{n/ \alpha, \infty } (\mathbb{R}^n)}.
\end{align}
\end{lemma}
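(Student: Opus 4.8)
The plan is to deduce the estimate from Lemma~\ref{Morreyforplemma}, the point being that a function lying in $L^{n/\alpha,\infty}(\mathbb{R}^n)$ and supported in the bounded set $2Q$ automatically belongs to \emph{every} Morrey space $\mathcal{M}^\alpha_q(\mathbb{R}^n)$ with $1\le q<n/\alpha$, with Morrey norm controlled by $\Vert f\Vert_{L^{n/\alpha,\infty}(\mathbb{R}^n)}$; once this is known, $I_\alpha f$ lands in $L^1(Q;\mathcal{H}^\beta_\infty)$ and the bound $(\ref{weakLnsecond})$ is immediate from $(\ref{extensionforcompactmorrey})$.

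\textbf{Step 1 (compactly supported weak-$L^{n/\alpha}$ embeds in Morrey spaces).} First I would record the elementary interpolation bound: for any measurable $E\subseteq\mathbb{R}^n$ of finite measure and any $1\le q<n/\alpha$,
\[
\Vert f\Vert_{L^q(E)}\le C(n,\alpha,q)\,|E|^{\frac1q-\frac\alpha n}\,\Vert f\Vert_{L^{n/\alpha,\infty}(\mathbb{R}^n)},
\]
obtained from the layer-cake formula and $|\{|f|>\lambda\}\cap E|\le\min\bigl(|E|,\ \Vert f\Vert_{L^{n/\alpha,\infty}}^{n/\alpha}\lambda^{-n/\alpha}\bigr)$ by splitting the $\lambda$-integral at the balance point $\lambda_0=\Vert f\Vert_{L^{n/\alpha,\infty}}\,|E|^{-\alpha/n}$ (the tail integral converges since $q<n/\alpha$). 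Applying this with $E=R\cap 2Q$ for an arbitrary cube $R$, and using $\supp f\subseteq 2Q$, $|R\cap 2Q|\le\ell(R)^n$, together with $\tfrac1q-\tfrac\alpha n>0$, gives
\[
\ell(R)^\alpha\Bigl(\tfrac1{\ell(R)^n}\textstyle\int_R|f|^q\,dy\Bigr)^{1/q}=\ell(R)^{\alpha-\frac nq}\Vert f\Vert_{L^q(R\cap 2Q)}\le C(n,\alpha,q)\,\Vert f\Vert_{L^{n/\alpha,\infty}(\mathbb{R}^n)}
\]
uniformly in $R$; that is, $f\in\mathcal{M}^\alpha_q(\mathbb{R}^n)$ with $\Vert f\Vert_{\mathcal{M}^\alpha_q(\mathbb{R}^n)}\le C(n,\alpha,q)\,\Vert f\Vert_{L^{n/\alpha,\infty}(\mathbb{R}^n)}$.

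\textbf{Step 2 (invoke Lemma~\ref{Morreyforplemma}).} Given $\beta\in(0,n]$, fix $q$ with $\max\bigl(1,\tfrac{n-\beta}{\alpha}\bigr)<q<\tfrac n\alpha$ — an interval which is nonempty because $\alpha<n$ and $\beta>0$, and whose midpoint is an explicit function of $n,\alpha,\beta$ — so that $\beta\in(n-\alpha q,n]$. Since $\supp f\subseteq 2Q$ and $f\in\mathcal{M}^\alpha_q(\mathbb{R}^n)$ by Step~1, Lemma~\ref{Morreyforplemma} gives $I_\alpha f\in L^1(Q;\mathcal{H}^\beta_\infty)$ together with
\[
\int_Q|I_\alpha f|\,d\mathcal{H}^\beta_\infty\le C''\,\ell(Q)^\beta\,\Vert f\Vert_{\mathcal{M}^\alpha_q(\mathbb{R}^n)}\le C(n,\alpha,\beta)\,\ell(Q)^\beta\,\Vert f\Vert_{L^{n/\alpha,\infty}(\mathbb{R}^n)},
\]
which is exactly $(\ref{weakLnsecond})$. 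The right-hand side carries no trace of the auxiliary exponent, so the estimate holds for every $1<p<n/\alpha$ as stated (one may simply take $q=p$ whenever $p>\tfrac{n-\beta}{\alpha}$).

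\textbf{Main obstacle and alternatives.} The only step with genuine content is Step~1, namely pinning down the correct power of $\ell(R)$ in the Morrey embedding; the rest is bookkeeping, and the scaling closes precisely because the Lebesgue-space exponent $\beta+\alpha-\tfrac nq$ appearing in $(\ref{extensionforcompactmorrey})$ cancels the gain $\tfrac nq-\alpha$ from Step~1. If a self-contained argument is preferred, one retraces the proof of Lemma~\ref{Morreyforplemma}: the dyadic-annulus bound $|I_\alpha f(x)|\le C_\eta\ell(Q)^{\alpha-\eta}\mathcal{M}_\eta f(x)$ from $(\ref{usualdyadicsplitting})$ with $\eta=\tfrac{n-\beta}{r}$, Hölder for the Choquet integral (Lemma~\ref{basicChoquet}(iii)) with exponent $r$ and $\Vert\chi_Q\Vert_{L^{r'}(\mathbb{R}^n;\mathcal{H}^\beta_\infty)}\asymp\ell(Q)^{\beta/r'}$, then Lemma~\ref{Adamslazylemma}(i) (with $\beta=n-\eta r$) and the $L^{n/\alpha,\infty}\to L^r$ bound from Step~1; the exponents $(\alpha-\eta)+\tfrac\beta{r'}+(\tfrac nr-\alpha)$ again collapse to $\beta$. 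The membership $I_\alpha f\in L^1(Q;\mathcal{H}^\beta_\infty)$, hence the $\mathcal{H}^\beta_\infty$-quasicontinuity of $I_\alpha f$, is obtained just as in Lemma~\ref{Morreyforplemma} by approximating $f$ in $L^r(2Q)$ by $C^\infty_c$ functions and applying the estimate just proved to the differences.
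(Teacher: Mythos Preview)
Your proof is correct and follows essentially the same approach as the paper: both reduce to Lemma~\ref{Morreyforplemma} after choosing $q\in(\max(1,(n-\beta)/\alpha),\,n/\alpha)$, using the elementary embedding of compactly supported $L^{n/\alpha,\infty}$ functions into $L^q$ (equivalently $\mathcal{M}^\alpha_q$) with the right power of $\ell(Q)$. The only cosmetic difference is that the paper obtains this embedding via H\"older's inequality in the Lorentz scale and plugs it into the first inequality of \eqref{extensionforcompactmorrey}, whereas you use the layer-cake/Kolmogorov argument to bound the full Morrey norm and then invoke the second inequality of \eqref{extensionforcompactmorrey}; the exponent bookkeeping is identical.
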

\begin{proof}
Since $ L^{ n/ \alpha, \infty}(\mathbb{R}^n)  \subseteq \mathcal{M}^\alpha_p (\mathbb{R}^n)$ for $ 1 < p < \frac{n}{\alpha}$, we have $I_\alpha f \in L^1 (Q; \mathcal{H}^\beta_\infty)$ for $0<\beta\leq n$ by Lemma \ref{Morreyforplemma}.

Choosing $p \in (1, \frac{n}{\alpha})$ so that $n-\alpha p <\beta$, it then follows again by Lemma \ref{Morreyforplemma} that there exists a constant $C'= C'(n, \alpha ,\beta)$ such that 
\begin{align}\label{quotefrompMorreyextension}
     \int_Q |I_\alpha f| \; d\mathcal{H}^\beta_\infty \leq C' \ell(Q)^{\beta+\alpha -\frac{n}{p}} \Vert f \Vert_{ L^p(\mathbb{R}^n)}
\end{align}
since $f \in  L^{n/ \alpha, \infty}(\mathbb{R}^n) \subseteq \mathcal{M}^\alpha_p(\mathbb{R}^n)$.
Using H\"older's inequality in Lorentz scale, we get
\begin{align}\label{pslorentzestimate}
    \Vert f \Vert_{L^p (\mathbb{R}^n)} \leq C \Vert f \Vert_{L^{n/ \alpha, \infty } (\mathbb{R}^n) } \Vert  \chi_{2Q} \Vert_{L^{s,1} (\mathbb{R}^n)}, 
\end{align}
where $s $ satisfying $\frac{1}{p} = \frac{\alpha}{n} + \frac{1}{s}$.  Since
\begin{align*}
    \Vert \chi_{2Q} \Vert_{L^{s,1} (\mathbb{R}^n)} = C \ell(Q)^{\frac{n}{s}}
\end{align*}
and 
\begin{align*}
    \Vert \chi_{2Q} \Vert_{L^{p'} (\mathbb{R}^n;\mathcal{H}^\beta_\infty)} = C\ell(Q)^{\beta(1- \frac{1}{p})},
\end{align*}
the combination (\ref{quotefrompMorreyextension}) of (\ref{pslorentzestimate}) yields 
\begin{align*}
      \int_Q | I_\alpha f | \; d \mathcal{H}^\beta_\infty \leq C' \ell(Q)^{ \alpha + \beta - \frac{n}{p}} \Vert f \Vert_{L^p(\mathbb{R}^n)}\leq C'' \ell(Q)^{\beta} \Vert f \Vert_{L^{n/ \alpha, \infty } (\mathbb{R}^n)},
\end{align*}
which completes the proof.

\end{proof}
We conclude this section with a lemma that provides an auxiliary estimate for the proofs of Theorem \ref{MainMorreyTheorem} and Theorem \ref{potential_estimate} and it will be used in conjunction with Lemma \ref{preciseforweakLnalpha} and Lemma \ref{Morreyforplemma} in the proofs of Theorem \ref{MainMorreyTheorem} and Theorem \ref{potential_estimate}, respectively.

\begin{lemma}\label{outsidef2inMorrey}
    Let $0<\alpha < n$, $0<\beta \leq n$, $Q$ be an open cube in $\mathbb{R}^n$ with centre $x_0$ and $f$ be a real-valued function with $\supp{f} \subseteq (2Q)^c$. If  $I_\alpha f \in L^1_{loc} (\mathbb{R}^n)$, then $I_\alpha f $ is continuous in Q and there exists $c \in \mathbb{R}$ depending on the cube Q and $C>0$ depending on $n$, $\alpha$, $\beta$  such that 
    \begin{align}\label{Morreyoutsideestimate}
\int_{Q} | I_\alpha f -c| \; d\mathcal{H}^\beta_\infty \leq C \ell(Q)^\beta   \mathcal{M}_\alpha f (x_0).
    \end{align}
 
\end{lemma}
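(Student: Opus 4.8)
The plan is to fix a cube $Q$ with centre $x_0$ and sidelength $\ell(Q)$, and to exploit the fact that on $Q$ the kernel $|x-y|^{\alpha-n}$ is a smooth, slowly-varying function of $x$ because $y$ ranges only over $(2Q)^c$, so $|x-y| \gtrsim \ell(Q)$ uniformly. First I would set $c := I_\alpha f(x_0)$ (which is finite for a.e.\ such cube since $I_\alpha f \in L^1_{\mathrm{loc}}$, and after verifying continuity this holds for every $x_0$) and write, for $x \in Q$,
\begin{align*}
  |I_\alpha f(x) - c| \leq \frac{1}{\gamma(\alpha)} \int_{(2Q)^c} \left| \frac{1}{|x-y|^{n-\alpha}} - \frac{1}{|x_0-y|^{n-\alpha}} \right| |f(y)| \; dy.
\end{align*}
By the mean value theorem, for $x \in Q$ and $y \in (2Q)^c$ one has $\bigl| |x-y|^{\alpha-n} - |x_0-y|^{\alpha-n} \bigr| \leq C \ell(Q) |x_0-y|^{\alpha-n-1}$ (using $|x-x_0| \lesssim \ell(Q)$ and $|x-y| \approx |x_0-y| \gtrsim \ell(Q)$ on the whole segment). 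This already shows $I_\alpha f$ is continuous on $Q$: the integrand is dominated by an $L^1$ function in $y$ since, by the dyadic-annuli computation that already appeared in (\ref{part2forestimateII}), $\int_{(2Q)^c} |x_0-y|^{\alpha-n-1} |f(y)| \, dy \leq C \ell(Q)^{-1} \mathcal{M}_\alpha f(x_0) < \infty$, and one lets $x \to x_0'$.

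Next, combining the two displays, for every $x \in Q$,
\begin{align*}
  |I_\alpha f(x) - c| \leq C \ell(Q) \int_{(2Q)^c} \frac{|f(y)|}{|x_0-y|^{n-\alpha+1}} \; dy \leq C \ell(Q) \cdot \frac{C}{\ell(Q)} \mathcal{M}_\alpha f(x_0) = C \, \mathcal{M}_\alpha f(x_0),
\end{align*}
where the middle inequality is exactly the dyadic decomposition $\int_{(2Q)^c} |x_0-y|^{\alpha-n-1}|f(y)|\,dy = \sum_{k\geq 1} \int_{2^k Q \setminus 2^{k-1}Q}(\cdots) \leq \sum_k (2^{k-1}\ell(Q))^{-(n-\alpha+1)} \int_{2^kQ}|f| \leq \sum_k (2^{k-1}\ell(Q))^{-(n-\alpha+1)} (2^k\ell(Q))^{n-\alpha}\mathcal{M}_\alpha f(x_0)$, which is a convergent geometric series giving the factor $C\ell(Q)^{-1}$. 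So $|I_\alpha f - c|$ is bounded on $Q$ by the constant $C \,\mathcal{M}_\alpha f(x_0)$.

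Finally, since $I_\alpha f$ is continuous (hence $\mathcal{H}^\beta_\infty$-quasicontinuous) on $Q$ and $|I_\alpha f - c|$ is bounded there, monotonicity of the Choquet integral together with $\mathcal{H}^\beta_\infty(Q) \leq C\ell(Q)^\beta$ gives
\begin{align*}
  \int_Q |I_\alpha f - c| \; d\mathcal{H}^\beta_\infty \leq C\, \mathcal{M}_\alpha f(x_0) \cdot \mathcal{H}^\beta_\infty(Q) \leq C \ell(Q)^\beta \, \mathcal{M}_\alpha f(x_0),
\end{align*}
which is (\ref{Morreyoutsideestimate}). The only mildly delicate point — the one I would treat most carefully — is justifying that the constant $c$ can be taken to be the genuine pointwise value $I_\alpha f(x_0)$ for \emph{every} $x_0$ rather than merely almost every one: this needs the continuity statement, which in turn is bootstrapped from the same mean-value-theorem bound showing the defining integral for $I_\alpha f(x)$ converges locally uniformly in $x \in Q$; and one must also confirm $\mathcal{M}_\alpha f(x_0) < \infty$, which follows since $f \in L^1_{\mathrm{loc}}$ together with the finiteness of $\int_{(2Q)^c}|x_0-y|^{\alpha-n-1}|f|\,dy$ forces $\mathcal{M}_\alpha f(x_0)<\infty$ whenever $I_\alpha f \not\equiv \infty$ near $x_0$ (otherwise (\ref{Morreyoutsideestimate}) is vacuous). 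Everything else is the routine kernel estimate already rehearsed in Lemma \ref{locallyintegrablaformodifiedRieszpotential}.
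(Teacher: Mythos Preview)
Your proposal is correct and takes essentially the same route as the paper: set $c=I_\alpha f(x_0)$, bound the kernel difference by the mean-value theorem, and control the resulting integral $\int_{(2Q)^c}|x_0-y|^{\alpha-n-1}|f(y)|\,dy$ via the same dyadic-annulus sum as in~(\ref{part2forestimateII}). The only superficial differences are that you first record the uniform pointwise bound $|I_\alpha f - c|\le C\,\mathcal{M}_\alpha f(x_0)$ on $Q$ and then multiply by $\mathcal{H}^\beta_\infty(Q)$ (the paper integrates $|x-x_0|$ against $d\mathcal{H}^\beta_\infty$ directly), and that for the finiteness of $c$ and continuity the paper invokes Lemma~\ref{goodtermforlocalintegrabilityofRieszpotential} rather than your bootstrapping remark.
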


\begin{proof}
    Suppose that $I_\alpha f \in L^1_{loc}(\mathbb{R}^n)$. Then for every $x \in \mathbb{R}^n$, we have $I_\alpha (\tau_x f ) \in L^1_{loc} (\mathbb{R}^n)$, where $\tau_x f $ denotes the translation of $f$ by $x$ defined by $\tau f(y):= f(y+x)$ for $y \in \mathbb{R}^n$. By Lemma \ref{goodtermforlocalintegrabilityofRieszpotential}, we deduce that
\begin{align*}
    \int_{B_N} \frac{f(x+y)}{|y|^{n-\alpha }} \; dy = \int_{B_N} \frac{\tau_x f(y)}{|y|^{n-\alpha }} \; dy < \infty \text{ for any }N>0 
\end{align*}
and thus
\begin{align*}
    \int_{|y| > \frac{\ell(Q)}{2}} \frac{|f(x+y)|}{|y|^{n-\alpha}} \; dy < \infty.
\end{align*}
This implies that for every $x\in Q$,
\begin{align*}
    |I_\alpha f (x)| &\leq   \frac{1}{\gamma(\alpha)}\int_{(2Q)^c} \frac{|f(y)|}{|x-y|^{n-\alpha}} \; dy\\
    &\leq  \frac{1}{\gamma(\alpha)} \int_{|x-y| > \frac{\ell(Q)}{2}} \frac{|f(y)|}{|x-y|^{n-\alpha}} \; dy\\
    &= \frac{1}{\gamma(\alpha)} \int_{|y| > \frac{\ell(Q)}{2}} \frac{|f(x+y)|}{|y|^{n-\alpha}} \; dy\\
    &<\infty.
\end{align*}
Together with the estimate 
\begin{align}\label{meanvalueestimatetwopointsinfirstlemma}
\left|\frac{1}{|x_1 -y|^{n-\alpha}}- \frac{1}{|x_2 - y|^{n-\alpha}}\right| \leq C_{n,\alpha} \frac{|x_1 - x_2|}{|y|^{n-\alpha+1}}
\end{align}
holds for every $x_1,$ $x_2 \in Q$ and $y \in (2Q)^c$, we obtain
\begin{align*}
    |I_\alpha f (x_1) -I_\alpha f (x_2)|& \leq C_{n,\alpha} |x_1 - x_2| \int_{(2Q)^c} \frac{|f(y)|}{|y|^{n-\alpha +1}} \;  dy \\
    &\leq  C_{n,\alpha} |x_1 - x_2| \int_{|y|> \ell(Q)} \frac{|f(y)|}{|y|^{n-\alpha +1}} \;  dy \\
    &\leq C_{n,\alpha} \frac{|x_1 - x_2|}{\ell(Q)} \Vert f \Vert_{\mathcal{M}^{\alpha}_1 (\mathbb{R}^n)} \to 0
\end{align*}
as $|x_1 - x_2| \to 0$.  Thus, $I_\alpha f$ is continuous in $Q$.

Now it remains only to verify (\ref{Morreyoutsideestimate}).  Without loss of generality we take the center of the cube $Q$ to be the origin.  Setting 
\begin{align*}
c:= I_\alpha f (0) = \frac{1}{\gamma(\alpha)}  \int_{(2Q)^c} \frac{1}{|y|^{n-\alpha}} f(y)\;dy  ,
\end{align*} 
we have $|c| < \infty$ by lemma \ref{goodtermforlocalintegrabilityofRieszpotential}. Moreover, inequality (\ref{meanvalueestimatetwopointsinfirstlemma}) yields that 
\begin{align}
 \nonumber \int_{Q} | I_\alpha f(x) -c| \; d\mathcal{H}^\beta_\infty(x)
 &\leq \frac{1}{\gamma(\alpha)}  \int_{Q}   \int_{(2Q)^c} \left|\frac{1}{|x-y|^{n-\alpha}}- \frac{1}{|y|^{n-\alpha}}\right|  |f(y)|\;dy \;d\mathcal{H}^\beta_\infty (x) \\
\nonumber  &\leq C \int_{Q}  |x|   \;  d\mathcal{H}^\beta_\infty (x) \;\int_{(2Q)^c}\frac{|f(y)|}{|y|^{n-\alpha+1}}  \;dy  \\
\nonumber &\leq C l(Q)^{\beta +1} \int_{|y| \geq \ell(Q)}\frac{|f(y)|}{|y|^{n-\alpha+1}} \;dy\\ \nonumber
& \leq C \ell(Q)^\beta \mathcal{M}_{\alpha} f (0),
\end{align}
where we use (\ref{part2forestimateII}) in the forth inequality. This completes the proof.
\end{proof}

\section{Proofs of Main Results}

We now give the proof of Theorem \ref{MainMorreyTheorem}.

\begin{proof}
Let $Q \subseteq \mathbb{R}^n$ be an open cube and 
\begin{align*}
    I_\alpha f (x) & = I_\alpha f_1  (x) + I_\alpha f_2 (x),
\end{align*}
where $f_1 = f \chi_{2Q}$ and $f_2 = f - f_1$.  Then by Lemma \ref{forcompactmorrey}, we have $I_\alpha f_1 \in L^1(Q; \mathcal{H}^{\beta}_ \infty)$ and 
  \begin{align}\label{Main1estimate}
        \int_Q | I_\alpha f_1  | \; d \mathcal{H}^{\beta}_\infty \leq C \ell(Q)^{\beta} \Vert f \Vert_{\mathcal{M}^{\alpha}_p (\mathbb{R}^n)}.
    \end{align}
   Moreover, Lemma \ref{outsidef2inMorrey} yields that $I_\alpha f_2  \in L^1(Q; \mathcal{H}^{\beta}_ \infty)$ and there exists a constant $c \in \mathbb{R}$ such that
\begin{align}\label{Main2estimate}
 \nonumber   \int_{Q} | I_\alpha f_2(x) -c| \; d\mathcal{H}^{\beta}_\infty & \leq  C \ell(Q)^{\beta}  \Vert f \Vert_{
\mathcal{M}_1^{\alpha} (\mathbb{R}^n)}\\
&\leq C \ell(Q)^\beta \Vert f \Vert_{\mathcal{M}^\alpha_p (\mathbb{R}^n)}
.
\end{align}
The combination of (\ref{Main1estimate}) and (\ref{Main2estimate}) then gives
\begin{align*}
        \int_Q | I_\alpha f  | \; d \mathcal{H}^{\beta}_\infty \leq C \ell(Q)^{\beta} \Vert f \Vert_{\mathcal{M}^{\alpha}_p (\mathbb{R}^n)},
    \end{align*}
which completes the proof by taking the supremum over all the cubes $Q$ in $\mathbb{R}^n$.

\end{proof}

We now give the proof of Theorem \ref{potential_estimate}.

\begin{proof}
Let $Q$ be an open cube in $\mathbb{R}^n$. Denote $2Q$ be the cube with the same center and twice the side length and set
\begin{align*}
 I_\alpha f & = I_\alpha f_1 +  I_\alpha f_2,
\end{align*}
where $f_1 := f \chi_{2Q}$ and $f_2:= f - f_1$. Since $\supp{f_1} \subseteq 2Q$ and $f_1 \in L^{n/ \alpha, \infty} (\mathbb{R}^n)$, we have by Lemma \ref{preciseforweakLnalpha} that $ I_\alpha f \in L^1(Q; \mathcal{H}^\beta_\infty)$ and 
\begin{align}\label{f1Hbetaweaknalpha}
     \int_Q | I_\alpha f_1 | \; d \mathcal{H}^\beta_\infty \leq C' \ell(Q)^{ \alpha + \beta -\frac{n}{p}} \Vert f_1 \Vert_{L^{n/\alpha, \infty}(\mathbb{R}^n)}
\end{align}
for any $1 < p < \frac{n}{\alpha}$.

Moreover, since $f_2 \in L^{n/\alpha , \infty} (\mathbb{R}^n) \subseteq \mathcal{M}^{\alpha}_1 (\mathbb{R}^n)$ and $\supp{f_2} \subseteq (2Q)^c$,  an application of Lemma \ref{outsidef2inMorrey} yields that $I_\alpha f_2 \in L^1(Q; \mathcal{H}^\beta_\infty)$ and there exists a constant $c \in \mathbb{R}$ such that 
\begin{align*}
    \int_{Q} | I_\alpha f_2 -c| \; d\mathcal{H}^\beta_\infty &\leq C \ell(Q)^\beta   \|f\|_{\mathcal{M}_1^{\alpha}(\mathbb{R}^n)} \\
    &\leq C \ell(Q)^\beta \Vert f \Vert_{L^{n/\alpha, \infty} (\mathbb{R}^n)}.
\end{align*}
This completes the proof.

\end{proof}

We now give the proof of Theorem \ref{Mainresult}
\begin{proof}
    Suppose that $I_\alpha f \in BMO(\mathbb{R}^n)$. An application of Theorem \ref{Adamsequivalent} and Theorem \ref{MainMorreyTheorem} for $p=1$ yields that for $\beta \in (n-\alpha,n]$,
    \begin{align*}
        \Vert I_\alpha f \Vert_{BMO^\beta(\mathbb{R}^n)} \leq C_1 \Vert f \Vert_{\mathcal{M}^\alpha_1 (\mathbb{R}^n)} \leq C_2 \Vert f \Vert_{BMO (\mathbb{R}^n)}.
    \end{align*}

Now suppose that $I_\alpha f \in BMO^\beta(\mathbb{R}^n)$ for some $\beta \in ( n-\alpha,n]$. It follows by Lemma \ref{nesting} that
\begin{align*}
    \Vert I_\alpha f \Vert_{BMO(\mathbb{R}^n)} \leq C_\beta \Vert I_\alpha f \Vert_{BMO^\beta(\mathbb{R}^n)},
\end{align*}
which completes the proof.

\end{proof}

Finally, we give the proof of Theorem \ref{lotsofequivalent}.

\begin{proof}
    The equivalence of $(i)$ and $(vi)$ is due to Theorem \ref{Adams2015theorem}.  The equivalence of $(i)$ and $(ii)$ is due to Theorem \ref{Mainresult}.  The equivalence of $(ii)$ and $(v)$ is due to Lemma \ref{pseminorms}. Finally, the equivalence of $(ii)$, $(iii)$ and $(iv)$ is due to (\ref{jninequality_content_prime}) and (\ref{betaexpintegral}).
\end{proof}

\begin{bibdiv}

\begin{biblist}
\bib{Adams_1972}{article}{
   author={Adams, David R.},
   title={Traces of potentials. II},
   journal={Indiana Univ. Math. J.},
   volume={22},
   date={1972/73},
   pages={907--918},
   issn={0022-2518},
   review={\MR{0313783}},
   doi={10.1512/iumj.1973.22.22075},
}

\bib{Adams_1975}{article}{
   author={Adams, David R.},
   title={A note on Riesz potentials},
   journal={Duke Math. J.},
   volume={42},
   date={1975},
   number={4},
   pages={765--778},
   issn={0012-7094},
   review={\MR{0458158}},
}

\bib{AdamsChoquetnote}{article}{
   author={Adams, David R.},
   title={A note on Choquet integrals with respect to Hausdorff capacity},
   conference={
      title={Function spaces and applications},
      address={Lund},
      date={1986},
   },
   book={
      series={Lecture Notes in Math.},
      volume={1302},
      publisher={Springer, Berlin},
   },
   isbn={3-540-18905-X},
   date={1988},
   pages={115--124},
   review={\MR{0942261}},
   doi={10.1007/BFb0078867},
}

\bib{Adamsbook}{book}{
   author={Adams, David R.},
   author={Hedberg, Lars Inge},
   title={Function spaces and potential theory},
   series={Grundlehren der mathematischen Wissenschaften [Fundamental
   Principles of Mathematical Sciences]},
   volume={314},
   publisher={Springer-Verlag, Berlin},
   date={1996},
   pages={xii+366},
   isbn={3-540-57060-8},
   review={\MR{1411441}},
   doi={10.1007/978-3-662-03282-4},
}

\bib{AdamsChoquet1}{article}{
   author={Adams, David R.},
   title={Choquet integrals in potential theory},
   journal={Publ. Mat.},
   volume={42},
   date={1998},
   pages={3--66},
}

\bib{Adams_2011}{article}{
   author={Adams, David R.},
   author={Xiao, Jie},
   title={Morrey potentials and harmonic maps},
   journal={Comm. Math. Phys.},
   volume={308},
   date={2011},
   number={2},
   pages={439--456},
   issn={0010-3616},
   review={\MR{2851148}},
   doi={10.1007/s00220-011-1319-5},
}

\bib{Adams_2015}{article}{
   author={Adams, David R.},
   author={Xiao, Jie},
   title={Erratum to: Morrey potentials and harmonic maps [MR2851148]},
   journal={Comm. Math. Phys.},
   volume={339},
   date={2015},
   number={2},
   pages={769--771},
   issn={0010-3616},
   review={\MR{3370618}},
   doi={10.1007/s00220-015-2409-6},
}

\bib{AdamsMorreySpacebook}{book}{
   author={Adams, David R.},
   title={Morrey spaces},
   series={Lecture Notes in Applied and Numerical Harmonic Analysis},
   publisher={Birkh\"{a}user/Springer, Cham},
   date={2015},
   pages={xv+121},
   isbn={978-3-319-26679-4},
   isbn={978-3-319-26681-7},
   review={\MR{3467116}},
   doi={10.1007/978-3-319-26681-7},
}

\bib{BZ_1974}{article}{
   author={Bagby, Thomas},
   author={Ziemer, William P.},
   title={Pointwise differentiability and absolute continuity},
   journal={Trans. Amer. Math. Soc.},
   volume={191},
   date={1974},
   pages={129--148},
   issn={0002-9947},
   review={\MR{0344390}},
   doi={10.2307/1996986},
}

\bib{Chen-Spector}{article}{
   author={Chen, You-Wei},
   author={Spector, Daniel},
   title={On functions of bounded $\beta$-dimensional mean oscillation},
   journal={Adv. Calc. Var.},
   doi={https://doi.org/10.1515/acv-2022-0084},
}

\bib{chen2023capacitary}{article}{
   author={Chen, You-Wei Benson},
   author={Ooi, Keng Hao},
   author={Spector, Daniel},
   title={Capacitary maximal inequalities and applications},
   journal={J. Funct. Anal.},
   volume={286},
   date={2024},
   number={12},
   pages={Paper No. 110396, 31},
   issn={0022-1236},
   review={\MR{4729407}},
   doi={10.1016/j.jfa.2024.110396},
}

\bib{Cianchi_2008}{article}{
   author={Cianchi, Andrea},
   title={Moser-Trudinger trace inequalities},
   journal={Adv. Math.},
   volume={217},
   date={2008},
   number={5},
   pages={2005--2044},
   issn={0001-8708},
   review={\MR{2388084}},
   doi={10.1016/j.aim.2007.09.007},
}

\bib{Grafakos_classical}{book}{
   author={Grafakos, Loukas},
   title={Classical Fourier analysis},
   series={Graduate Texts in Mathematics},
   volume={249},
   edition={3},
   publisher={Springer, New York},
   date={2014},
   pages={xviii+638},
   isbn={978-1-4939-1193-6},
   isbn={978-1-4939-1194-3},
   review={\MR{3243734}},
   doi={10.1007/978-1-4939-1194-3},
}

\bib{hatano2023choquet}{article}{
      title={Choquet integrals, Hausdorff content and fractional operators}, 
      author={Naoya Hatano and Ryota Kawasumi and Hiroki Saito and Hitoshi Tanaka},
      year={2023},
      eprint={2308.00915},
      archivePrefix={arXiv},
      primaryClass={math.FA}
}

\bib{Petteri_2023}{article}{
   author={Harjulehto, Petteri},
   author={Hurri-Syrj\"{a}nen, Ritva},
   title={On Choquet integrals and Poincar\'{e}-Sobolev inequalities},
   journal={J. Funct. Anal.},
   volume={284},
   date={2023},
   number={9},
   pages={Paper No. 109862, 18},
   issn={0022-1236},
   review={\MR{4545158}},
   doi={10.1016/j.jfa.2023.109862},
}

\bib{JN}{article}{
   author={John, F.},
   author={Nirenberg, L.},
   title={On functions of bounded mean oscillation},
   journal={Comm. Pure Appl. Math.},
   volume={14},
   date={1961},
   pages={415--426},
   issn={0010-3640},
   review={\MR{131498}},
   doi={10.1002/cpa.3160140317},
}

\bib{MD_2021}{article}{
   author={Mart\'{\i}nez, \'{A}ngel D.},
   author={Spector, Daniel},
   title={An improvement to the John-Nirenberg inequality for functions in
   critical Sobolev spaces},
   journal={Adv. Nonlinear Anal.},
   volume={10},
   date={2021},
   number={1},
   pages={877--894},
   issn={2191-9496},
   review={\MR{4191703}},
   doi={10.1515/anona-2020-0157},
}

\bib{Mazja_1985}{book}{
   author={Maz'ja, Vladimir G.},
   title={Sobolev spaces},
   series={Springer Series in Soviet Mathematics},
   note={Translated from the Russian by T. O. Shaposhnikova},
   publisher={Springer-Verlag, Berlin},
   date={1985},
   pages={xix+486},
   isbn={3-540-13589-8},
   review={\MR{0817985}},
   doi={10.1007/978-3-662-09922-3},
}

\bib{Moser}{article}{
   author={Moser, J.},
   title={A sharp form of an inequality by N. Trudinger},
   journal={Indiana Univ. Math. J.},
   volume={20},
   date={1970/71},
   pages={1077--1092},
   issn={0022-2518},
   review={\MR{0301504}},
   doi={10.1512/iumj.1971.20.20101},
}

\bib{PS_2023}{article}{
   author={Ponce, Augusto C.},
   author={Spector, Daniel},
   title={Some remarks on capacitary integrals and measure theory},
   conference={
      title={Potentials and partial differential equations---the legacy of
      David R. Adams},
   },
   book={
      series={Adv. Anal. Geom.},
      volume={8},
      publisher={De Gruyter, Berlin},
   },
   isbn={978-3-11-079265-2},
   isbn={978-3-11-079272-0},
   isbn={978-3-11-079278-2},
   date={[2023] \copyright 2023},
   pages={235--263},
   review={\MR{4654520}},
}
\bib{Sawyer_1982}{article}{
   author={Sawyer, Eric T.},
   title={A characterization of a two-weight norm inequality for maximal
   operators},
   journal={Studia Math.},
   volume={75},
   date={1982},
   number={1},
   pages={1--11},
   issn={0039-3223},
   review={\MR{0676801}},
   doi={10.4064/sm-75-1-1-11},
}

\bib{STW_2016}{article}{
   author={Saito, Hiroki},
   author={Tanaka, Hitoshi},
   author={Watanabe, Toshikazu},
   title={Abstract dyadic cubes, maximal operators and Hausdorff content},
   journal={Bull. Sci. Math.},
   volume={140},
   date={2016},
   number={6},
   pages={757--773},
   issn={0007-4497},
   review={\MR{3543752}},
   doi={10.1016/j.bulsci.2016.02.001},
}

\bib{Trudinger}{article}{
   author={Trudinger, Neil S.},
   title={On imbeddings into Orlicz spaces and some applications},
   journal={J. Math. Mech.},
   volume={17},
   date={1967},
   pages={473--483},
   review={\MR{0216286}},
   doi={10.1512/iumj.1968.17.17028},
}

\bib{S}{book}{
   author={Stein, Elias M.},
   title={Singular integrals and differentiability properties of functions},
   series={Princeton Mathematical Series, No. 30},
   publisher={Princeton University Press, Princeton, N.J.},
   date={1970},
   pages={xiv+290},
   review={\MR{0290095}},
}

\bib{MR3381284}{book}{
   author={Wheeden, Richard L.},
   author={Zygmund, Antoni},
   title={Measure and integral},
   series={Pure and Applied Mathematics (Boca Raton)},
   edition={2},
   note={An introduction to real analysis},
   publisher={CRC Press, Boca Raton, FL},
   date={2015},
   pages={xvii+514},
  isbn={978-1-4987-0289-8},
   review={\MR{3381284}},
}

\bib{YY_2008}{article}{
   author={Yang, Dachun},
   author={Yuan, Wen},
   title={A note on dyadic Hausdorff capacities},
   journal={Bull. Sci. Math.},
   volume={132},
   date={2008},
   number={6},
   pages={500--509},
   issn={0007-4497},
   review={\MR{2445577}},
   doi={10.1016/j.bulsci.2007.06.005},
}
\end{biblist}

\end{bibdiv}

\end{document}